\title{Renewal theory for random walks on surface groups}
\author{Peter Ha\"{\i}ssinsky, Pierre Mathieu, and  Sebastian M\"uller
}
\newtheorem{thm}{Theorem}[section]
\newtheorem{cor}[thm]{Corollary}
\newtheorem{lem}[thm]{Lemma}
\theoremstyle{definition}
\newtheorem{defn}{Definition}[section]
\newtheorem{assumption}{Assumption}
\theoremstyle{remark}
\newtheorem{rem}{Remark}[section]
\newtheorem{que}{Question}[section]
\newtheorem{ex}{Example}[section]
\newcommand{\eps}{\varepsilon}
\newcommand{\g}{\gamma}
\newcommand{\A}{\mathcal{A}}
\newcommand{\GG}{\mathcal{G}}
\newcommand{\normal}{\mathcal{N}}
\newcommand{\bT}{\textbf{T}}
\newcommand{\x}{\textbf{x}}
\newcommand{\y}{\textbf{y}}
\newcommand{\e}{\textbf{e}}
\newcommand{\T}{\mathcal{T}}
\newcommand{\D}{\mathcal{D}}
\newcommand{\R}{\mathbb{R}}
\newcommand{\C}{\mathbb{C}}
\newcommand{\Z}{\mathbb{Z}}
\newcommand{\N}{\mathbb{N}}
\newcommand{\1}{\textbf{1}}
\renewcommand{\i}{\textbf{i}}
\renewcommand{\P}{\mathbb{P}}
\newcommand{\Q}{\mathbb{Q}}
\newcommand{\Paths}{\mathcal{P}}
\newcommand{\E}{\mathbb{E}}
\renewcommand{\O}{\mathcal{O}}
\newcommand{\eval}[2][\right]{\relax
  \ifx#1\right\relax \left.\fi#2#1\rvert}
\begin{document}
\maketitle {\abstract 
We construct a renewal structure for random walks on surface groups. 
The renewal times are defined as times when the random walks enters a particular type of a cone and never leaves it again. As a consequence, the trajectory of the random walk  can be expressed as an \emph{aligned union} of i.i.d.~trajectories between the renewal times. Once having established this renewal structure, we prove  a central limit theorem for the distance to the origin under exponential moment conditions.  Analyticity of the speed and of the asymptotic variance are natural consequences of our approach. Furthermore, our method applies to  groups with infinitely many ends and therefore generalizes classic results on central limit theorems on free groups.}
\\
\newline {\scshape Keywords:} renewal theory, surface groups, central limit theorem, analyticity
\newline {\scshape AMS 2000 Mathematics Subject Classification:} 60G50, 60F05, 60B15

\section{Introduction} 
The idea and motivation behind a renewal theory for random walks on groups is to find a  decomposition of the  trajectory of the  walk into aligned pieces in such a way that these pieces are identically and independently distributed. 
The main result of this paper is the construction of such a renewal theory for random walks on an important class of hyperbolic groups. 

Analogous renewal structures have been developed for random walks on free groups and trees with finitely many cone types, Nagnibeda and Woess \cite{NW}, free products of groups  and regular languages, Gilch  \cite{G:07, G:08}. It is also a common technique used in the study of random walks in random environment in order to prove  laws of large numbers and  {annealed} central limit theorems, \emph{e.g.,} see \cite{Z}.  
However, to the best of our knowledge the renewal structure given in this paper constitutes the first example on one-ended groups beyond $\Z^{d}$.

We invite the reader to  consider the following simple but instructing example: nearest neighbor random walk  on the free group $\mathbb{F}_{2}=\langle a,b\rangle$ with two generators.   In order to define a suitable renewal structure  we recall the definition of cone types after Cannon, \emph{e.g.}, see \cite{ECHLPT}. A cone $C(x)$ consists of all vertices $y$ such that $x$ lies on the geodesic from the group identity $e$ to $y$.  A cone type describes the way one \emph{can look to infinity}, \emph{i.e.}, $T(x)=\{y:~xy\in C(x)\}$. It turns out that there are five different cone types, say $\textbf{e},\textbf{a},\textbf{a}^{-},\textbf{b},$ and $\textbf{b}^{-}$. The cone type $\textbf{e}$ corresponds to the cone of $e$ and $\textbf{x}$ corresponds to the one where an $x$ edge \emph{leads back} to the identity.   
The cone types $\textbf{a},\textbf{a}^{-},\textbf{b},$ and $\textbf{b}^{-}$ have the important property that a cone of one type contains cones of the other three types.
Furthermore, any irreducible random walk is transient and hence has a positive probability to stay in a cone $C(x)$ for all times. So,
 let us fix a cone type, say $\textbf{a}$, and  define the renewal  times $(R_{n})_{n\geq 1}$ as follows. Let $R_{1}$ be the first time that the walk visits a cone of type $\textbf{a}$ that it will never leave again. Inductively, we define $R_{n+1}$ as the first time after $R_{n}$ that the walk visits a cone of type $\textbf{a}$ that it will never leave again. One can check that $(R_{n+1}-R_{n})_{n\geq 1}$ is indeed an i.i.d.~sequence of random variables. Furthermore, we have (using the definition of the cones) that 
$$d(Z_{R_{n}},e)=d(Z_{R_{1}},e)+\sum_{i=1}^{n-1}d(Z_{R_{i+1}}, Z_{R_{i}}),$$ where $d(\cdot,\cdot)$ is the word metric.
Non-amenability of the free group implies that  the random variables in the above equation all have some exponential moments. It is now standard to deduce a law of large numbers and a central limit theorem for the distance to the origin. Moreover, the renewal structure enables us to describe the rate of escape and the asymptotic variance, in terms of first and second moments of random variables with exponential moments. This fact allows a very good control of the regularity of these two quantities. 

The main technical difficulties that arise when developing the above sketch into a mathematical proof  are due to the fact that  the random times $R_{n}$ are not stopping times but depend on future events of the walk. Hence, conditioning on events described by $R_{n}$ destroys the Markovian structure of the random walk. 
Furthermore, for general hyperbolic groups it is not known wether  the Cannon automaton gives rise to cones with as nice properties as the ones in the free group. However, we shall prove that one can get a nice  renewal structure for random walks on surface groups.

The next Subsection  contains a short introduction on central limit theorems for random walks on groups and Subsection \ref{intr:anal} concerns recent results on analyticity of the rate of escape.
Section \ref{sec:not} prepares the ground for the main results in giving the necessary notation, definitions and preliminary results. In Section \ref{sec:ren} the renewal structure is formally defined and the main results are proven.

\subsection{Central limit theorem}\label{intr:clt}

Let $(X_{i})_{i\geq 0}$ be i.i.d~random variables taking values in $\Z^{d}$. Under a second moment condition we have the classical central limit theorem (CLT)  
$$\frac{\sum_{i=1}^{n} X_{i}-nv}{\sqrt n } \xrightarrow[n \to \infty]{\mathcal{D}} \mathcal{N}(0,\sigma^{2}),$$  
where $v=\E[X_{1}]$ is the rate of escape (or drift) and $\sigma^{2}$ the asymptotic variance.
A natural question, which goes back to Bellman \cite{bellman} and Furstenberg and Kesten \cite{FK}, is to which extent this phenomenon generalizes to $(X_{i})_{i\geq 1}$ taking values in some finitely generated group $\Gamma$.  Let $d(\cdot,\cdot)$ be a  left invariant metric on $\Gamma$ and $\mu$ a probability measure  whose support generates the group $\Gamma$. Let $(X_{i})_{i\geq 1}$ be i.i.d.~random variables with distribution $\mu$ and define the random walk  $Z_{n}= X_{1} X_{2} \cdots  X_{n} .$  Then, if $d(X_{1},e)$ has a finite first moment, Kingman's subadditive ergodic theorem ensures that 
$$\lim_{n\to\infty} \frac1n d(Z_{n},e)=:v$$ exists  in the almost sure and $L^1$ senses and is deterministic. 
In other words, there is a law of large numbers for random walks on groups. 
Moreover, Guivarc'h \cite{Gui:80} proved that if $\Gamma$ is a non-amenable finitely generated group then any random walk with a finite first moment has positive rate of escape with respect to any word metric.
However, it turns out that a central limit theorem can not be stated in this general setting. As described in Bj\"orklund \cite{bjorklund} one can use the result of Erschler (\cite{E:99}, \cite{E:01}) to construct the following counterexample. Let  $\Gamma= (\Z \wr \Z) \times \mathbb{F}_{2}$ where $\wr$ is the wreath product and  $\mathbb{F}_{2}$ denotes the free group on two generators. There exists a symmetric probability measure $\mu$ with finite support on $\Gamma$ and a word metric $d$ such that the fluctuations around the linear (positive) drift are of order $n^{\frac34}$. 

However, there are several situations where central limit theorems are established. Sawyer and Steger in \cite{ST} studied the case of the free group $\mathbb{F}_{d}$ with $d$ standard generators and the  corresponding word distance.  Under technical moment conditions they prove that $(d(Z_{n},e)-nv)/\sqrt{n}$ converges in law to some non-degenerated Gaussian distribution. While their proof uses analytic extensions of Green functions, another proof was given by Lalley \cite{La:93} using algebraic function theory and Perron--Frobenius theory. A geometric proof was later presented by Ledrappier \cite{Le:01}. A generalization for trees with finitely many cone types can be found in Nagnibeda and Woess \cite{NW}.  Recently, Bj\"orklund \cite{bjorklund} proved a central limit theorem on hyperbolic groups with respect to the Green metric. The proof in \cite{bjorklund} is based on the identification of the Gromov boundary with the horofunction boundary. This fact enables to prove the CLT using  a martingale approximation. However, the CLT for the Green metric does not seem to imply directly the central limit theorem for the drift with respect to any word metric on $\Gamma$. 

One of the main objectives of this paper is to demonstrate that a CLT for one-ended groups, Theorem \ref{thm:CLT_planar}, can also be obtained by using a renewal structure. We have managed to do so for random walks on surface groups. However, we believe that our approach  should work in the general setting of hyperbolic groups, see Section \ref{sec:discussion} for a short discussion. 

\begin{thm}\label{thm:CLT_planar}
Let $\Gamma$ be a surface group with standard generating set $S$ and corresponding  word metric $d$. Furthermore, let 
 $\mu$ be a driving measure with exponential moments  whose support contains the generating set $S$. Then
$$ \frac{d(Z_{n}, e) -nv }{\sqrt{n}} \xrightarrow[n \to \infty]{\mathcal{D}}  \normal (0, \sigma^2),$$
with \begin{equation*}v=\frac{\E[ d(Z_{ R_{2}}, Z_{ R_{1}})]}{\E[  R_{2}- R_{1}]} \mbox{ and }
\sigma^{2}=\frac{\E[(d(Z_{R_{2}}, Z_{R_{1}}) - (R_{2}- R_{1})v)^{2}]}{\E[R_{2}-R_{1}]}.
\end{equation*}
\end{thm}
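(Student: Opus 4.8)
The plan is to reduce the statement to a classical renewal/CLT argument for i.i.d. sums, after establishing that the renewal times $(R_n)$ furnish a genuine i.i.d. decomposition of the trajectory. First I would prove the two structural facts that make everything work: (i) the increments $(R_{n+1}-R_n)_{n\ge 1}$ together with the trajectory segments $(Z_{R_n}^{-1}Z_{R_n+k})_{0\le k\le R_{n+1}-R_n}$ form an i.i.d. sequence, and (ii) the additivity of the distance along renewal points,
$$ d(Z_{R_n},e)=d(Z_{R_1},e)+\sum_{i=1}^{n-1} d(Z_{R_{i+1}},Z_{R_i}). $$
Claim (ii) is the surface-group analogue of the free-group identity in the Introduction, and it will follow from the defining property of the renewal cones: once the walk enters a cone $C(x)$ of the distinguished type and never leaves it, the point $x$ lies on every geodesic from $e$ to any later position, so distances add up exactly along the nested sequence of renewal cones. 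Claim (i) is the delicate point discussed in the Introduction — the $R_n$ are not stopping times — and I expect this to be handled by the machinery of Section \ref{sec:ren}: conditioning on the event that a given cone is never exited, the post-$R_n$ process is a fresh copy of the walk conditioned to stay in a cone of the same type, and by the homogeneity of cone types on a surface group this conditional law does not depend on $n$.

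Next I would establish the exponential moment bounds: $R_2-R_1$ and $d(Z_{R_2},Z_{R_1})$ have finite exponential moments. The geometric input is that a surface group is non-amenable (indeed hyperbolic), so the walk has positive speed and, more to the point, the probability that the walk, after entering a suitable cone, ever leaves it decays; combined with the exponential moment assumption on $\mu$ this gives a uniform geometric/exponential tail for the time between successive renewals and hence for the displacement $d(Z_{R_2},Z_{R_1})\le \sum_{k=R_1+1}^{R_2} d(X_k,e)$, a random sum with exponentially many terms of exponential-moment increments. The groups-with-infinitely-many-ends case mentioned in the abstract would be subsumed here since the cone structure and non-amenability are what is used, not one-endedness per se.

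Finally I would assemble the CLT. Write $N(n)=\max\{k: R_k\le n\}$ for the renewal counting process. By (ii) and a telescoping estimate,
$$ \Big| d(Z_n,e) - \sum_{i=1}^{N(n)} d(Z_{R_{i+1}},Z_{R_i}) \Big| \le d(Z_{R_1},e) + d(Z_n, Z_{R_{N(n)}}), $$
and both error terms are $o(\sqrt n)$ — in fact $O(1)$ in an averaged sense — because of the exponential moments; this is where the standard renewal-theory lemmas (Anscombe's theorem, the renewal CLT $\left(N(n)-n/\E[R_2-R_1]\right)/\sqrt n$ Gaussian) enter. Setting $\tau=R_2-R_1$ and $D=d(Z_{R_2},Z_{R_1})$, the i.i.d. vectors $(\tau_i,D_i)$ satisfy a bivariate CLT, and the usual computation (centering $D_i$ by $v\tau_i$ with $v=\E[D]/\E[\tau]$) yields that $\left(\sum_{i=1}^{N(n)} D_i - nv\right)/\sqrt n$ converges to $\normal(0,\sigma^2)$ with $\sigma^2 = \E[(D-v\tau)^2]/\E[\tau]$, which is exactly the claimed formula. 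The main obstacle, as flagged, is step (i): making rigorous that the shifted trajectory after a renewal time is independent of the past and equidistributed, despite $R_n$ depending on the future — this requires the careful cone analysis and a decomposition of the path space that I expect to occupy the bulk of Section \ref{sec:ren}, and everything downstream is then a routine application of renewal theory and the Lindeberg–Anscombe toolkit.
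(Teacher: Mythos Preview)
Your proposal is correct and follows essentially the same route as the paper: the renewal structure of Section~\ref{sec:ren} supplies the i.i.d.\ decomposition and the distance additivity (Theorem~\ref{thm:reg} via Lemma~\ref{lem:markov}), Lemma~\ref{lem:momentbounds} provides the exponential moments, and the CLT is then obtained by applying a randomly-indexed CLT to the centered variables $\xi_i=d(Z_{R_{i+1}},Z_{R_i})-(R_{i+1}-R_i)v$ together with an $o(\sqrt{n})$ overshoot estimate from Corollary~\ref{cor:overshoot}. The only cosmetic difference is that where you invoke Anscombe's theorem/renewal CLT, the paper phrases the same step via Donsker's invariance principle plus a deterministic time change (Theorems~14.1 and~14.4 in Billingsley); the content is identical.
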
 

\begin{rem}
An analogous result holds true for random walks on groups with infinitely many ends. In fact,  by applying Stalling's splitting theorem, 
one can check that all arguments  work fine for amalgamated free products and HNN-extensions over finite groups with a suitable choice of generators, see also Remark \ref{rem:inftyends}. 
\end{rem}

Let us note here that other results in this direction are known for actions of linear semigroups on projective spaces by Le Page \cite{LePage} and Guivarc'h and le Page \cite{GLeP:04}. We also want to mention earlier works of Tatubalin \cite{Tutubalin:65} and \cite{Tutubalin:68} on random walks on hyperbolic space. 

More recently, Pollicott and Sharp \cite{PS:10} use a thermodynamical formalism to prove  limit theorems for matrix groups acting cocompactly on the hyperbolic group. Calegari gives generalizations  to actions on general hyperbolic groups in the survey paper  \cite[Section 3]{Calegari}.

\subsection{Analyticity of the rate of escape and asymptotic variance}\label{intr:anal}
Fix a group $\Gamma$, a finite generating set $S$, and a probability measure $\mu$ on $\Gamma$. Let $v_{\mu}$ be the drift corresponding to $\mu$ with respect to the word metric induced by the generating set. A natural question asks whether $v_{\mu}$ (and the asymptotic entropy) depends continuously on $\mu$.   Continuity of the rate of escape (and the asymptotic entropy) is known on  hyperbolic groups under the more general condition of having a finite first moment, see  Kaimanovich and {\`E}rshler \cite{EK}. Analyticity of the rate of escape and of the asymptotic entropy on free groups was  proven by Ledrappier in \cite{Le:10}. More recently, Ledrappier \cite{Le:11} proves Lipschitz continuity for the rate of escape and asymptotic entropy for random walks on Gromov hyperbolic groups. 

Moreover, analyticity of the rate of escape also follows in certain cases where explicit formul\ae~for the rate of escape are known, see Mairesse and  Mathéus \cite{MM:07b}  and Gilch, \cite{G:07} and \cite{G:08}.  Mairesse and Mathéus \cite{MM:07} show that the rate of escape for some random walks on the Braid group $B_{3}=\langle a,b | aba = bab\rangle$ is continuous but not(!)~differentiable. Finally, we refer to the recent survey of Gilch and Ledrappier  \cite{GL} on results on the regularity of drift and entropy of random walks on groups.

The  central limit theorem, Theorem \ref{thm:CLT_planar}, provides formul\ae~for the drift $v$ and asymptotic variance $\sigma^{2}$ in terms of renewal times and hence offers a new approach in order to study analyticity. Moreover, this approach allows to consider random walks with infinite support.

Fix  a driving measure  $\nu$ of a random walk with exponential moments, \emph{i.e.}, $\E[\exp(\lambda d(X_{1},e))]<\infty$  for some $\lambda>0$. Let $B$ be a finite subset of the support of $\nu$, \emph{i.e.,}  $B \subseteq supp(\nu)$.
 Let $\Omega_{\nu} (B)$ be the set of probability measures that give positive weight to all elements of $B$ and coincide with $\nu$ outside $B$. The set  $\Omega_{\nu} (B)$ can be identified with an open bounded convex subset in $\R^{|B|-1}$. For each $\mu\in \Omega_{\nu}(B)$ we define the functions $v_{\mu}$ and $\sigma_{\mu}$ as the rate of escape and the asymptotic variance for the random walk with law $\mu$.
\begin{thm}\label{thm:analytic}
Let $\Gamma$ be a surface group with standard generating set $S$ and let $\nu$ be a driving measure with exponential moments whose support contains the generating set $S$. Then, for all $B$ such that $ B \subseteq supp(\nu)$ the functions $\mu\mapsto v_{\mu}$ and $\mu\mapsto \sigma_{\mu}$ are real analytic on $\Omega_{\nu}(B)$.
\end{thm}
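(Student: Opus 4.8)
The plan is to leverage the renewal formulae from Theorem~\ref{thm:CLT_planar} and show that both the numerator and denominator of $v_\mu$ and $\sigma_\mu^2$ depend analytically on $\mu \in \Omega_\nu(B)$. Since $\Omega_\nu(B)$ is an open convex subset of $\R^{|B|-1}$ and real-analyticity is a local property that is stable under sums, products, and quotients (with non-vanishing denominator), it suffices to express $\E[R_2 - R_1]$, $\E[d(Z_{R_2}, Z_{R_1})]$, and $\E[(d(Z_{R_2},Z_{R_1}) - (R_2-R_1)v_\mu)^2]$ as convergent power series in the finitely many parameters that describe $\mu$ near a fixed base point $\mu_0 \in \Omega_\nu(B)$. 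The denominator $\E[R_2-R_1]$ is bounded below by $1$ (and is finite by the exponential-moment control established for the renewal construction), so the quotients cause no trouble once the numerators and denominator are shown analytic; the dependence of $v_\mu$ on $\mu$ is itself analytic, so plugging it into the variance formula preserves analyticity.

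The core of the argument is therefore to write each relevant expectation as a sum over trajectories and to control this sum uniformly in a complex neighborhood of $\mu_0$. Concretely, I would fix a base measure $\mu_0$ and write a general $\mu \in \Omega_\nu(B)$ as $\mu(s) = \mu_0(s) + \sum_{s \in B} t_s \,\delta_s$-type perturbation, so that the probability of any finite trajectory $w = (s_1, \dots, s_k)$ becomes a polynomial $\prod_{i=1}^k \mu(s_i)$ in the variables $(t_s)_{s \in B}$, of degree at most $k$. The events $\{R_1 = m\}$, $\{R_2 - R_1 = \ell\}$, etc., decompose as (infinite) disjoint unions of cylinder-type events; crucially, because the renewal times depend on the future, one must use the regeneration/decoupling statements proven in Section~\ref{sec:ren} (the i.i.d.\ structure of the increments and the fact that the law of $(R_2-R_1, Z_{R_2}-Z_{R_1})$ is that of a first renewal increment for the walk started afresh) to reduce everything to a single renewal block. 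Each expectation then takes the schematic form $\sum_{k \ge 1} \sum_{|w| \sim k} (\text{combinatorial weight}) \cdot f(w) \cdot \prod_i \mu(s_i)$, where $f(w)$ grows at most linearly (for $R_2-R_1$ or $d(Z_{R_2},Z_{R_1})$) or quadratically in $k$.

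The main obstacle — and the step requiring real care — is establishing that these series converge uniformly on a complex neighborhood of $\mu_0$, i.e.\ an exponential tail bound for the renewal increments that persists under small complex perturbations of the driving measure. For real $\mu$ this is exactly the exponential-moment statement underlying Theorem~\ref{thm:CLT_planar}: there exist $c, \rho > 0$ (depending on $\mu_0$) with $\P_{\mu_0}[R_2 - R_1 > k] \le c e^{-\rho k}$ and similarly for $d(Z_{R_2},Z_{R_1})$, and the proof of that bound is based on the positive probability of staying forever in a suitable cone together with non-amenability/hyperbolicity. I would argue that this probability, and hence the decay rate $\rho$, is continuous in $\mu$ near $\mu_0$ (uniform over $\Omega_\nu(B)$ in a neighborhood, using that all parameters stay bounded away from $0$ and from the boundary), so one keeps a uniform geometric bound $|\P_\mu[R_2-R_1 = k]| \le c' e^{-\rho' k}$ for $\mu$ in a fixed complex polydisk around $\mu_0$, where on the left $\P_\mu$ now denotes the analytic continuation of the trajectory-sum. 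Combined with the polynomial-in-$k$ growth of $f(w)$ and the at-most-exponential-in-$k$ number of relevant trajectories (bounded by $|S|^{Ck}$, absorbed into the geometric factor by shrinking the polydisk), this gives a normally convergent series of polynomials, hence — by Weierstrass / Morera in several complex variables — a holomorphic, and thus real-analytic, function of $\mu$. Assembling the pieces yields analyticity of $v_\mu$ and then of $\sigma_\mu^2$, which is the assertion of Theorem~\ref{thm:analytic}.
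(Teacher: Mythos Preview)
Your overall strategy matches the paper's: express the renewal moments as uniformly convergent series of polynomial functions of the parameters and invoke Weierstrass. But you are vague at exactly the two places where the real work lies, and the paper's proof shows what is actually needed there.

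First, the events defining $R_2-R_1$ are \emph{not} cylinder events: the law of the increment is that of $R_1$ under $\Q_{\bT}=\P_x[\,\cdot\mid D_x=\infty]$, and the conditioning on the infinite-horizon event $\{D_x=\infty\}$ must be handled explicitly before anything can be written as a sum over finite paths. The paper does this by introducing $h^{\mu}(z)=\P_z^{\mu}[D_x=\infty]$, proving separately that $\mu\mapsto h^{\mu}(z)$ is analytic (Lemma~\ref{lem:hanalytic}), and then using the Doob $h$-transform identity (Lemma~\ref{lem:claim}) to convert conditional expectations into unconditional ones carrying a telescoping factor $h^{\mu}(Z_k)/h^{\mu}(Z_0)$. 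Only after this step, together with the decomposition $R_2-R_1=1+E\circ\theta+\sum_k(S_k-S_{k-1})\1_{S_1<\infty,\ldots,S_k<\infty}$, do the expectations become sums over finite paths. Your appeal to ``regeneration/decoupling statements'' does not by itself produce such a decomposition.

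Second, your uniformity argument (``the decay rate $\rho$ is continuous in $\mu$, so persists on a complex polydisk'') is not a proof: for complex $\mu$ there is no probabilistic meaning to the trajectory-sum and no a priori reason it should inherit the exponential bound. The paper's device is cleaner and avoids this entirely: fix the reference measure $\mu$, choose the neighborhood $\O_\mu$ so that $\max_{s\in B}\tilde\mu(s)/\mu(s)\le d$, and bound each $\tilde\mu$-term by $d^k$ times the corresponding $\mu$-probability via the Radon--Nikodym factor $\prod_i\tilde\mu(X_i)/\mu(X_i)$. Since the exponential tail $\P^{\mu}[\,\cdot=k]\le Cc^k$ is already established at the single measure $\mu$, choosing $d<1/c$ gives the uniform geometric bound directly, with no continuity argument and no path counting. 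In particular your bound ``$|S|^{Ck}$ on the number of relevant trajectories'' is neither correct when $\nu$ has infinite support nor needed.

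In short, your outline is headed the right way, but to make it a proof you need the $h$-transform to remove the conditioning and the change-of-measure bound to obtain uniformity; these are precisely Lemmas~\ref{lem:claim} and~\ref{lem:hanalytic} and Equation~(\ref{eq:vfrac}) in the paper.
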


\begin{rem}
After  this  paper was made publicly available in $2013$  considerable progress has been made. In particular, Benoist and Quint \cite{BQ} proved a central limit theorem for random walks on hyperbolic groups under the optimal second moment condition and Gou{\"e}zel \cite{Go:15} shows analyticity for the rate of escape, asymptotic variance, and asymptotic entropy for random walks with finite support. The approaches used in these works are based on spectral techniques and may be considered  ``antipodal'' to ours.
\end{rem}

\section{Notation and Preliminaries}\label{sec:not}

\subsection{Cone types, geodesic automata of hyperbolic groups}

Let $\Gamma$ be a finitely generated group and let $S$ be a symmetric and finite generating set. For sake of brevity we speak just of the group $(\Gamma,S)$ instead of the group $\Gamma$ together with a finite generating set $S$. The Cayley graph $X$ associated with $S$ is the graph whose  vertex set is the set of all group elements and whose edge set consists of all pairs $(\g,\g')\in\Gamma\times\Gamma$ such that $\g^{-1}\g'\in S$. Endowing $X$ with the length
metric  which makes each edge isometric to the segment $[0,1]$ defines the {\it word metric $d(\cdot,\cdot)$  associated with $S$}.  
This metric turns $X$ into a geodesic proper metric space on which $\Gamma$ acts geometrically by left-translation.

Let $B_{n}(x)$ be the ball of radius $n$ around $x$; set for brevity  $B_{n}=B_{n}(e)$. 
The neighborhood relation is written as $\sim$, \emph{i.e.}, $x\sim y$ if $d(x,y)=1$, or equivalently $x^{-1}y\in S$.
 A path is a sequence of adjacent vertices in $X$ and is denoted by  $\langle \cdot \rangle$.
 Let  $\A=(V_{\A}, E_{\A}, s_*)$ be a finite directed graph with vertex set $V_{\A}$, edge set $\E_{\A}$, and a  distinguished vertex $s_*$ together with
a labeling $\alpha: E_{\A} \to S$ of the edges. Vertices and edges of $\A$ will be denoted using bold fonts. 
The vertex set $V_{\A}$ will often be identified with $\A$, \emph{i.e.}, $\x\in \A$ means a vertex $\x\in V_{\A}$.

Denote the set  
$$\Paths:=\{ \mbox{finite paths in } \A \mbox{ starting in } s_*\}.$$ 
For $m\in\N\cup\{\infty\}$, each path $\gamma=\langle \x_{1},\ldots, \x_{m}\rangle \in \Paths$ 
gives rise to a path in $\Gamma$ starting from $e$.  Denote by $\e_{1}$ the edge between $s_{*}$ and $\x_{1}$ and by  $\e_{i}$ the edge between $\x_{i-1}$ and $\x_{i}$ for $i\geq 2$.  The path  corresponding to $\gamma$ is then defined by 
$$ \alpha(\gamma) =\langle e, \alpha(\e_{1}),\alpha(\e_{1})\alpha(\e_{2}),\ldots, \prod_{i=1}^{m} \alpha(\e_{i})\rangle.$$
 
\begin{defn}
An \emph{automatic structure} for a group $(\Gamma,S)$ is given by a finite state automaton $\A$ and a labeling $\alpha$ which
satisfy the following properties:
\begin{itemize}
\item no edge in $E_{\A}$ ends at $s_*$,
\item every vertex $v\in V_{\A}$ is accessible from $s_*$,
\item for every path $\gamma\in \Paths$, the path $\alpha(\gamma)$ is a geodesic path in $\Gamma$,
\item the mapping $\alpha^{*}$ from  $\Paths$ to $\Gamma$ which associates the endpoint of the geodesic is surjective.
\end{itemize}
We speak of a \emph{strongly automatic structure} if $\alpha^{*}$ defines a bijection between $\Paths$ and $\Gamma$.
\end{defn}

For hyperbolic groups, the existence of a (strongly) automatic structure is due to Cannon and based on the definition of cones. The  \emph{cone} after Cannon  of a group element $x$ is defined (for any choice of generating set) as
$$ C(x):=\{ y\in \Gamma:~d(e,y)=d(e,x)+d(x,y)\}.$$ We say $x$ is the \emph{root} of $C(x)$. The \emph{cone type} is defined as
$$ T(x):=\{ y\in \Gamma:~d(e,xy)=d(e,x)+d(x,xy)\} = x^{-1}C(x).$$

Cannon's fundamental result, see \emph{e.g.,}  \cite{ECHLPT}, is that a hyperbolic group 
has only finitely many cone types. Furthermore, we may thus associate a directed graph  $\A_C=(V_{\A}, E_{\A}, s_*)$ with distinguished vertex $s_*$ together with
a labeling $\alpha: E_{\A} \to S$ of the edges as follows.
The set of vertices $V_{\A}$ is the set of cone types, and $s_*=T(e)$ is the cone type of the neutral element;
there is a directed edge $\e=(\bT_1,\bT_2)$ labeled by $s$ ($\alpha(\e)=s$) between two cone types if there is an element $x\in\Gamma$ 
such that  $T(x)= \bT_1$, $T(xs)=\bT_2$ and $s\in T(x)$.
This structure $(\A_C,\alpha)$ is by definition the {\it Cannon automaton} of $(\Gamma,S)$.
We may obtain a strongly automatic structure from $\A_C$ by choosing a lexicographic ordering of the cone types; see \cite{ECHLPT} for details. 

Furthermore, any  strongly  automatic structure $\A$ defines \emph{cones} $C_{\mathcal{A}}$ and \emph{cone types} $T_{{\mathcal{A}}}$ as follows.
Given $x\in\Gamma$ and a path $\gamma_x\subset\A$ representing $x$, we let $C_\mathcal{A}(x)$ denote the set of all points of $\Gamma$
which are represented by paths with $\gamma_x$ as prefix. The {cone} $C_\mathcal{A}(x)$ is well defined since its construction does not depend on the choice of the representing path $\gamma_{x}$. We say that $x$ is the \emph{root} of the cone $C_\mathcal{A}(x)$. Moreover, we can define {cone types} as $T_\mathcal{A}(x)=x^{-1}C_\mathcal{A}(x)$ which has the neutral element $e$ as root.

A vertex $\y$ (or cone type) is  \emph{accessible} from $\x$ if there is a path from $\x$ to $\y$. 
In this case we write $\x\to\y$.  A vertex $\x\in \A$ is  \emph{recurrent} if $\x\to\x,$ otherwise 
it is called \emph{transient}. The set of recurrent vertices $\mathcal{R}$ induces a subgraph $\A_{\mathcal{R}}$  of $\A$, \emph{i.e.,} the graph whose vertex set equals to $\mathcal{R}$ and two vertices $\x$ and $\y$ are joint by an edge if only if they are neighbors in $\A$.  By extension and abuse of standard notation, we will say $x\in\Gamma$ is \emph{recurrent} if its cone type is recurrent. Recall that a (directed) graph is \emph{strongly connected} if every vertex is reachable from any other vertex by following the directions.

\begin{assumption}\label{ass:1}
There exists an automatic structure $\A$ associated to $S$ such that the subgraph $\A_{\mathcal{R}}$ is strongly connected. 
\end{assumption}

This assumption is verified for non-exceptional Fuchsian groups with  particular generating sets as shown is \cite{Se:82}.
In particular, surface groups with  standard generating sets  satisfy Assumption \ref{ass:1}. Not astonishingly, it also holds for groups with infinitely many ends for a suitable choice of generators. 
In fact this is a  consequence of Stalling's splitting theorem; any finitely generated group $\Gamma$ has more than one end if and only if 
the group splits as an amalgamated free product or an HNN-extension  over a finite subgroup of $\Gamma$.

In the sequel we need the following definitions.
Let us say a cone type $\bT$ is {\it large} if it is a neighborhood in $\Gamma\cup \partial\Gamma$ 
of a boundary point of the Gromov hyperbolic boundary $\partial \Gamma$. Any cone type containing a large cone type is again large. Moreover, we have the following fact.

\begin{lem}\label{lem:intnonempty}
Let $(\Gamma,S)$ be a non-elementary hyperbolic group and $\mathcal{A}$ an automatic structure satisfying Assumption \ref{ass:1}. If there exists at least one large recurrent cone type,   then all recurrent cone types are large.
\end{lem}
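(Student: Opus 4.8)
The plan is to exploit the strong connectedness of $\A_{\mathcal R}$ together with the fact that "being large" propagates along edges of the automaton. First I would record the monotonicity observation already noted in the text: if $\bT_1 \to \bT_2$ is an edge of $\A$, i.e. there exists $x \in \Gamma$ with $T(x) = \bT_1$, $T(xs) = \bT_2$, $s \in T(x)$, then the cone $C(xs) \subset C(x)$ sits inside $C(x)$, and in fact any geodesic ray from $e$ through $x$ continues through $xs$; more importantly, if $\bT_2$ is large — so $C(xs)$, suitably translated, is a neighborhood in $\Gamma \cup \partial\Gamma$ of some boundary point $\xi \in \partial\Gamma$ — then $C(x) \supseteq C(xs)$ is also such a neighborhood (of the same translated boundary point), so $\bT_1$ is large as well. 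Thus largeness travels \emph{backwards} along edges: if $\bT_1 \to \bT_2$ and $\bT_2$ is large, then $\bT_1$ is large.

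Next I would promote this to travelling \emph{forwards} within the recurrent part. Suppose $\bT_0$ is a large recurrent cone type, and let $\bT$ be any other recurrent cone type. Since $\A_{\mathcal R}$ is strongly connected, there is a directed path in $\A_{\mathcal R}$ from $\bT$ to $\bT_0$, say $\bT = \bT_k \to \bT_{k-1} \to \cdots \to \bT_1 \to \bT_0$. Applying the backward-propagation step along each edge, starting from the large end $\bT_0$ and moving back to $\bT_1, \bT_2, \ldots, \bT_k$, we conclude successively that $\bT_1$ is large, then $\bT_2$ is large, and so on, until $\bT = \bT_k$ is large. Since $\bT$ was an arbitrary recurrent cone type, all recurrent cone types are large.

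The only genuine subtlety — and the step I would treat most carefully — is making precise what "neighborhood in $\Gamma \cup \partial\Gamma$ of a boundary point" means in terms of the automaton versus in terms of the group, and checking that inclusion of cones really does preserve this property at the level of cone \emph{types} rather than cones. Concretely: if $x$ is a root with $T(x) = \bT_1$ large witnessed by some $\xi$, and $y$ is another root with $T(y) = \bT_1$, one must know that $T(y)$ is large witnessed by the corresponding point in $\partial\Gamma$ obtained by transporting via $y x^{-1}$; this is exactly the content of the definition of cone type ($T(x) = x^{-1}C(x)$ depends only on $\bT_1$), combined with the fact that the left action of $\Gamma$ on $\Gamma \cup \partial\Gamma$ is by homeomorphisms. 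Once that identification is in place, the inclusion $C(xs) \subset C(x)$ of cones, which is immediate from the definition of the cone via prefixes of automaton paths, transfers to the statement that a large cone type downstream forces the upstream cone type to be large, and the strong-connectedness argument above finishes the proof.
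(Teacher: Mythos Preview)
Your argument is correct and matches the paper's approach. The paper does not actually spell out a proof of this lemma; it merely records the key monotonicity fact in the sentence preceding the statement (``Any cone type containing a large cone type is again large'') and leaves the reader to combine it with the strong connectedness of $\A_{\mathcal R}$ from Assumption~\ref{ass:1}. Your write-up is precisely that combination: backward propagation of largeness along automaton edges via the inclusion $C_\mathcal{A}(xs)\subset C_\mathcal{A}(x)$, followed by a directed path in $\A_{\mathcal R}$ from an arbitrary recurrent type $\bT$ to the given large recurrent type $\bT_0$.
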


We shall say that a cone type $\bT$ is {\it ubiquitous} if there exists some $R$ such that any ball $B_R(x)$ in $X$ contains a 
vertex $y$ with $T(y)=\bT$. A ubiquitous cone type is recurrent, and under Assumption \ref{ass:1} 
every recurrent cone type is ubiquitous. 
We define the (inner) boundary of a cone $C_\mathcal{A}(x)$ as 
$$\partial_{\Gamma} C_\mathcal{A}(x):=\{y\in  C_\mathcal{A}(x):~ \exists z\in \Gamma\setminus C_\mathcal{A}(x)~\mbox{such that}~z\sim y\}$$ and   $\partial_{\infty} C_\mathcal{A}(x)$ 
as the closure of $C_\mathcal{A}(x)$ at infinity, \emph{i.e.}, in the Gromov hyperbolic compactification.
Let $\gamma=\langle x_{1}, x_{2}, \ldots\rangle $ be a geodesic, we also denote by $\gamma$ the set $\{ x_{1}, x_{2}, \ldots\}$.

\subsection{Use of constants} 
Constants in capital letters are chosen \emph{sufficiently large} and constants in small letters stand for positive constants that are \emph{sufficiently small.}
Constants without any label, \emph{e.g.,} $C$, are considered to be \emph{local}, \emph{i.e.}, their values may change from line to line.  Labelled constants, \emph{e.g.,}~$C_{h}$, are defined \emph{globally} and their values do not change as the paper goes along.

\subsection{Random walks on groups}
Let $\Gamma$ be a finitely generated group 
and $S$ a  symmetric and finite generating set.  
Let $\mu$ be a probability measure on $\Gamma$ with support generating $\Gamma$ as a semigroup. By definition, the random walk associated with $\mu$ is the Markov chain with state space $\Gamma$ 
and transition probabilities $p(x,y)=\mu(x^{-1}y)$ for $x,y\in \Gamma$. The measure $\mu$ is called the driving measure of the random walk. 
We shall use the notation $\T= \Gamma^{\mathbb{N}}$ for the path space and $Z_n$ for the position of the walk at time $n$ and 
$X_n:=Z_{n-1}^{-1}Z_n$ for its increment. 

Let $\P_{x}$ denote the  distribution  of the random walk $(Z_{n})_{n\geq 0}$ when started at $x\in\Gamma$, and write $\P$ for $\P_{e}$. 
Observe that $\P_x$ is also the unique probability measure on $\T$ 
under which $Z_0=x$ and the $X_n$'s are i.i.d.~random variables with law $\mu$. 
On the set of trajectories $\T$ we will also make use of the shift map $\theta:\T\to\T$ defined by $\theta[ (z_n)_{n\ge 0}] = (z_{n+1})_{n\ge 0}$.

An elementary hyperbolic group is either finite or has two ends. Random walks on non-elementary hyperbolic groups are transient. 
As soon as the law $\mu$ has a finite first moment, \emph{i.e.,} $\E[d(e,X_{1})]<\infty$, 
the random walk $Z_{n}$ converges $\P$-a.s.~to some point $Z_{\infty}$ in the Gromov hyperbolic boundary $\partial\Gamma$, see Theorem 7.3 in \cite{Kai:00}.

The harmonic measure $\nu$ is defined as the law of $Z_{\infty}$. In other words, it is the probability measure 
on $\partial \Gamma$ 
such that $\nu(A)=\P[Z_{\infty}\in A]$ for $A\subset\partial\Gamma$.
Since $\Gamma$ is non-elementary and the random walk is assumed to be irreducible we have that $\nu(\xi)=0$ 
for all $\xi\in\partial \Gamma$ and $\nu(O)>0$ for any open set $O\subset \partial \Gamma$.
 
\begin{lem}\label{lem:stayincone}
Let $(\Gamma, S)$ be a non-elementary hyperbolic group and $\mathcal{A}$ a corresponding automatic structure. 
Let $\mu$ be a driving measure whose support generates $\Gamma$ as a semigroup.
If $\bT$ is a large cone type in $\mathcal{A}$ then for all $x$ with $T_\mathcal{A}(x)=\bT$ we have that $$\P_{x}[Z_{n}\in C_\mathcal{A}(x) \mbox{ for all but finitely many } n\geq 0]>0.$$ 
\end{lem}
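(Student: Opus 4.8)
\section*{Proof proposal for Lemma~\ref{lem:stayincone}}

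The plan is to reduce the statement to a statement about the harmonic measure of the boundary piece $\partial_\infty C_\mathcal{A}(x)$, and then to run a Borel--Cantelli-type argument on successive passages through such cones. First I would observe that since $\bT$ is large, $\partial_\infty C_\mathcal{A}(x)$ contains a nonempty open subset $O$ of $\partial\Gamma$; by the properties of the harmonic measure recalled just before the lemma, $\nu(O)>0$, hence $\P_x[Z_\infty\in \partial_\infty C_\mathcal{A}(x)]>0$. The point of the lemma is to upgrade ``the limit point lies in the closure of the cone'' to ``the walk is eventually trapped inside the cone'', which is not automatic because the walk could in principle oscillate across $\partial_\Gamma C_\mathcal{A}(x)$ infinitely often while still converging to a boundary point of the cone.

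The key geometric input I would use is hyperbolicity together with the fact that $C_\mathcal{A}(x)$ is, up to bounded error, the set of points whose geodesics to infinity stay ``beyond'' the root $x$ in the appropriate direction; more precisely, a point $y$ in $C_\mathcal{A}(x)$ deep inside (say at distance $\ge D$ from $\partial_\Gamma C_\mathcal{A}(x)$ for a suitable constant $D=D(\delta)$) has the property that \emph{every} geodesic ray from $y$ that converges to a point of $\partial_\infty C_\mathcal{A}(x)$ stays in $C_\mathcal{A}(x)$, and, conversely, any path leaving $C_\mathcal{A}(x)$ from such a $y$ must first travel distance $\ge D$. Combined with the convergence $Z_n\to Z_\infty$ and a coarse estimate controlling $d(Z_n, \partial_\Gamma C_\mathcal{A}(x))$ in terms of the Gromov product $(Z_n \mid Z_\infty)_x$, this shows that on the event $\{Z_\infty\in O\}$ the walk eventually enters the ``deep part'' of $C_\mathcal{A}(x)$; once there, the probability of ever leaving again is bounded by the probability that the increments push the walk back a distance $\ge D$, which is small (and $\to 0$ as $D\to\infty$) because $\mu$ has a finite first moment, indeed exponential moments are not even needed here --- Markov's inequality on $\sum d(e,X_i)$ suffices, but one can also simply use transience to infinity.

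To make this rigorous I would proceed as follows. Fix $x$ with $T_\mathcal{A}(x)=\bT$ and let $O\subset\partial_\infty C_\mathcal{A}(x)$ be open with $\nu_x(O)=\P_x[Z_\infty\in O]=:p_0>0$, where $\nu_x$ is the harmonic measure from $x$; note $\nu_x$ is equivalent to $\nu$ since the walk is irreducible with finite first moment, so $p_0>0$. Using the Cannon-automaton description one checks that there is a constant $D$ and a point $x'\in C_\mathcal{A}(x)$ with $T_\mathcal{A}(x')$ again containing a large recurrent cone type and $C_\mathcal{A}(x')\subset C_\mathcal{A}(x)$ with $d(x', \Gamma\setminus C_\mathcal{A}(x))\ge D$; moreover one can take $D$ as large as we wish at the cost of shrinking the cone. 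By the convergence $Z_n\to Z_\infty$, on $\{Z_\infty\in O\}$ the Gromov product $(Z_n\mid Z_\infty)_x\to\infty$, and for $n$ large this forces $Z_n\in C_\mathcal{A}(x)$ with $d(Z_n,\Gamma\setminus C_\mathcal{A}(x))\to\infty$, using the standard fact (hyperbolicity) that a large Gromov product with a boundary point forces the vertex to lie near a geodesic ray to that point, which itself lies eventually in $C_\mathcal{A}(x)$. Hence $\P_x[Z_n\in C_\mathcal{A}(x)\text{ for all but finitely many }n, \ Z_\infty\in O]$ equals $p_0>0$, and a fortiori the probability in the statement is $\ge p_0>0$.

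The main obstacle, and the step I would be most careful about, is the geometric claim that converging to a boundary point of $\partial_\infty C_\mathcal{A}(x)$ forces the walk to eventually be \emph{inside} $C_\mathcal{A}(x)$ and to stay there: the cone $C_\mathcal{A}(x)$ coming from an automatic structure need not be geodesically convex, only ``quasiconvex with a definite Hausdorff-gap'' away from its boundary $\partial_\Gamma C_\mathcal{A}(x)$, so one must quantify how the inner boundary $\partial_\Gamma C_\mathcal{A}(x)$ sits relative to $\partial_\infty C_\mathcal{A}(x)$ in the compactification --- essentially one needs that $\partial_\infty(\partial_\Gamma C_\mathcal{A}(x))$ is the topological frontier of $\partial_\infty C_\mathcal{A}(x)$ in $\partial\Gamma$, so that an interior boundary point of the cone is ``shielded'' by a definite collar of the inner boundary. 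This is where Assumption~\ref{ass:1} (so that recurrent, hence ubiquitous, cone types are available) and Lemma~\ref{lem:intnonempty} (all recurrent cone types large) are used, to guarantee that such a shielded subcone $C_\mathcal{A}(x')$ with a large recurrent type actually exists inside $C_\mathcal{A}(x)$; once that geometric picture is in place the probabilistic part is soft.
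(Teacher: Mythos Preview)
Your starting point is exactly right --- reduce to positivity of the harmonic measure on an open $O\subset\partial_\infty C_\mathcal{A}(x)$ and then argue that on $\{Z_\infty\in O\}$ the walk is eventually trapped in the cone. But you then work much harder than necessary, and in the process invoke hypotheses the lemma does not assume.

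The paper's proof is two lines, and the reason is that you have not unpacked the definition of \emph{large}. ``$\bT$ is large'' means the cone type is a neighbourhood \emph{in the compactification} $\Gamma\cup\partial\Gamma$ of some boundary point. Translating by $x$, this says there is an open set $U$ in $\Gamma\cup\partial\Gamma$ with $U\cap\Gamma\subset C_\mathcal{A}(x)$. Take $O:=U\cap\partial\Gamma$; this is open in $\partial\Gamma$, hence has positive harmonic measure. Now $Z_n\to Z_\infty$ in the topology of the compactification, so on $\{Z_\infty\in O\}\subset\{Z_\infty\in U\}$ we have $Z_n\in U$ for all large $n$, and since $Z_n\in\Gamma$ this forces $Z_n\in U\cap\Gamma\subset C_\mathcal{A}(x)$. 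That is the entire argument.

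Your worry that the walk ``could in principle oscillate across $\partial_\Gamma C_\mathcal{A}(x)$ infinitely often while still converging to a boundary point of the cone'' is precisely what the definition of large rules out: it is not merely $\partial_\infty C_\mathcal{A}(x)$ that contains an open set, but the cone \emph{itself} is (the $\Gamma$-part of) an open neighbourhood in the compactification. So convergence in that topology already gives eventual containment; no Gromov products, deep subcones, or quasiconvexity estimates are needed.

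There is also a genuine gap in your write-up: in your last paragraph you invoke Assumption~\ref{ass:1} and Lemma~\ref{lem:intnonempty} to produce a ``shielded'' recurrent subcone $C_\mathcal{A}(x')$. Neither of these is a hypothesis of Lemma~\ref{lem:stayincone}, which assumes only that $\bT$ is large. If your argument truly required them, it would not prove the lemma as stated. Fortunately it does not --- once you use the definition of large as above, the subcone apparatus becomes superfluous.
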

\begin{proof}
Let $O$ be an open subset of $\partial_{\infty}C_\mathcal{A}(x)$.
On the event that $Z_{\infty}\in O$, at some moment, 
the random walk $(Z_n)$ enters  $C_\mathcal{A}(x)$ and never leaves it afterwards.   
\end{proof}

Recall that our aim is to define a sequence of renewal times that corresponds to a sequence of cones in which the random walks stays forever. 
Therefore, we need the statement of Lemma \ref{lem:stayincone} to hold for all $n\in\N$. The next assumption is made to ensure this; however we see in Section \ref{sec:bypass} how to bypass this assumption.

\begin{assumption}\label{ass:2}
The support of the driving measure $\mu$ contains the generating set $S$  of the group $\Gamma$.
\end{assumption}

\begin{lem}\label{lem:stayinconegen}
Let $(\Gamma, S)$ be a non-elementary hyperbolic group  and $\mathcal{A}$ a corresponding automatic structure. Let $\mu$ be a driving measure that satisfies Assumption \ref{ass:2}. Then, there exists some $c>0$ such that 
for all $x$ such that $T(x)$ is large  we have that $$\P_{x}[Z_{n}\in C_\mathcal{A}(x) \mbox{ for all  } n\geq 0]>c.$$ 
\end{lem}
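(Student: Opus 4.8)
The statement strengthens Lemma~\ref{lem:stayincone} in two ways: the probability of staying in the cone must hold for \emph{all} $n \geq 0$ (not just eventually), and the lower bound must be \emph{uniform} over all roots $x$ whose cone type is large. I would establish both points by combining the eventual-containment statement with a local surgery argument that uses Assumption~\ref{ass:2} to force the walk to ``backtrack into the cone'' whenever it strays, and then use the finiteness of the set of cone types to get uniformity. First I would fix a large recurrent cone type; by Lemma~\ref{lem:intnonempty} all recurrent cone types are then large, and by Assumption~\ref{ass:1} every recurrent cone type is ubiquitous, so there are only finitely many distinct cone types $\bT$ to consider, each realized at roots $x$ with $T_\mathcal{A}(x)=\bT$.

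The core quantitative input is that, for a large cone $C_\mathcal{A}(x)$, there is an open set $O \subset \partial_\infty C_\mathcal{A}(x)$ which is a neighborhood of an interior boundary point. Since $\Gamma$ is non-elementary and the walk irreducible, the harmonic measure $\nu$ gives positive mass to every open subset of $\partial\Gamma$, and more to the point, starting from $x$ there is a uniform lower bound on the probability that $Z_\infty$ lands in (a slightly shrunk copy of) $O$ \emph{and} that the whole trajectory $(Z_n)_{n \geq 0}$ stays inside $C_\mathcal{A}(x)$: this is essentially Lemma~\ref{lem:stayincone}, but to make it hold for \emph{all} $n$ rather than all but finitely many, I would argue that conditioned on converging to a point deep inside $\partial_\infty C_\mathcal{A}(x)$, any excursion outside $C_\mathcal{A}(x)$ must be of bounded geometric type near the root, and Assumption~\ref{ass:2} (the support contains all of $S$) gives a positive probability of never making the ``first step out'' by instead taking a step that moves strictly into the cone. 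Concretely: from $x$, the generating step $s \in S$ that advances along the representing path of $x$ into $C_\mathcal{A}(x)$ has probability at least $\min_{s\in S}\mu(s) =: \delta > 0$; iterating, one can with probability bounded below push the walk far enough into the cone that a single application of the (eventual) staying-in-cone estimate at the new root, combined with the fact that the intermediate vertices are all interior to $C_\mathcal{A}(x)$, yields a trajectory in $C_\mathcal{A}(x)$ for all times.

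More carefully, here is the scheme I would write. For each large cone type $\bT$ pick a representative root $x_0$ with $T_\mathcal{A}(x_0)=\bT$; by ubiquity and finiteness it suffices to bound $\P_{x_0}[Z_n \in C_\mathcal{A}(x_0)\ \forall n \geq 0]$ below by a constant depending only on $\bT$, then take the minimum over the finitely many large cone types. By Lemma~\ref{lem:stayincone}, $\P_{x_0}[Z_n \in C_\mathcal{A}(x_0)\text{ for all but finitely many }n] = p > 0$; hence there is a deterministic $N$ and an event of probability at least $p/2$ on which the walk is in $C_\mathcal{A}(x_0)$ from time $N$ onward and lies within $B_N(x_0)$ up to time $N$. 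On the complementary behaviour before time $N$, I use Assumption~\ref{ass:2}: the event that the first $N$ increments $X_1,\dots,X_N$ follow the labels of a fixed geodesic path in $\mathcal{A}$ extending the representative of $x_0$ (hence $Z_1,\dots,Z_N$ march strictly into $C_\mathcal{A}(x_0)$, landing at some $x_1$ with $T_\mathcal{A}(x_1)$ large) has probability at least $\delta^N$. By the Markov property, from $x_1$ the walk stays in $C_\mathcal{A}(x_1) \subseteq C_\mathcal{A}(x_0)$ for all but finitely many times with probability $p'$, and a further conditioning (again paying a $\delta$-factor per step over a bounded horizon determined by hyperbolicity constants and $N$) forces it to stay in $C_\mathcal{A}(x_1)$ for \emph{all} times. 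Multiplying the finitely many uniform factors gives the bound $c > 0$.

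\textbf{Main obstacle.} The delicate point is upgrading ``all but finitely many $n$'' to ``all $n$'' uniformly: a priori the random (finite) number of excursions outside the cone, and their depths, are not bounded. The way around this is that an excursion out of $C_\mathcal{A}(x)$ can only happen through the inner boundary $\partial_\Gamma C_\mathcal{A}(x)$ near the root --- by the thin-triangles/hyperbolicity property, a point of $C_\mathcal{A}(x)$ that is within bounded distance of $\Gamma \setminus C_\mathcal{A}(x)$ lies in a bounded neighborhood of the root $x$ --- so one only has to control a bounded ``window'' and can absorb it into a uniform $\delta^{(\cdot)}$ cost, with the exponent controlled by the hyperbolicity constant and the automatic structure, not by the (unbounded) global geometry. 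Making this ``bounded window'' statement precise, and checking it is uniform over all large cone types, is where the real work lies; everything else is the Markov property and a finite product of positive constants.
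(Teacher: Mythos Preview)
Your approach is substantially more complicated than needed, and in the process you both miss the key simplifying observation and introduce an incorrect geometric claim.

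\textbf{The missed observation.} The event $\{Z_n \in C_\mathcal{A}(x)\ \forall n\geq 0\}$ is described entirely by the trajectory $(x^{-1}Z_n)_n$ inside the cone type $T_\mathcal{A}(x)$. By left-invariance of the walk, this probability therefore depends only on the cone type, not on $x$: if $T_\mathcal{A}(x)=T_\mathcal{A}(y)$ then $\P_x[Z_n\in C_\mathcal{A}(x)\ \forall n]=\P_y[Z_n\in C_\mathcal{A}(y)\ \forall n]$. Since there are only finitely many cone types, uniformity is automatic once you establish positivity for a single representative of each large type. Your elaborate bookkeeping of $\delta^N$ factors to obtain uniformity is unnecessary, and your detour through Assumption~\ref{ass:1} and Lemma~\ref{lem:intnonempty} invokes hypotheses that the present lemma does not assume.

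\textbf{The upgrade is simpler.} From Lemma~\ref{lem:stayincone} one directly extracts a \emph{single} point $y\in C_\mathcal{A}(x)$ with $\P_y[Z_n\in C_\mathcal{A}(x)\ \forall n\geq 0]>0$: on the positive-probability event that the walk eventually stays in $C_\mathcal{A}(x)$, decompose over the position at some fixed time past the last exit and apply the Markov property. Assumption~\ref{ass:2} then furnishes a path from $x$ to $y$ inside $C_\mathcal{A}(x)$ of positive $\mu$-probability (walk along a geodesic, which stays in the cone), and one more application of the Markov property finishes. Your scheme instead marches to a new root $x_1$ and then needs to upgrade ``all but finitely many'' to ``all'' again at $x_1$ --- an infinite regress you never close.

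\textbf{The wrong claim.} Your resolution of the ``main obstacle'' asserts that a point of $C_\mathcal{A}(x)$ within bounded distance of the complement must lie in a bounded neighborhood of the root. This is false already for surface groups: by Lemma~\ref{lem:coneshape} the inner boundary $\partial_\Gamma C_\mathcal{A}(x)$ consists of two geodesic \emph{rays} emanating from $x$, so there are points of $C_\mathcal{A}(x)$ adjacent to the complement at arbitrarily large distance from $x$. The ``bounded window'' picture does not hold, and the argument resting on it does not go through.
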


\begin{proof}
The event $\{Z_{0}=x, Z_{n}\in C_\mathcal{A}(x) \mbox{ for all  } n\geq 0\}$ consists only of  trajectories that stay inside the cone $C_\mathcal{A}(x)$. Hence, invariance of the walk implies that for $x,y$ such that $T_\mathcal{A}(x)=T_\mathcal{A}(y)$ we have
\[
\P_{x}[Z_{n}\in C_\mathcal{A}(x) \mbox{ for all  } n\geq 0]=\P_{y}[Z_{n}\in C_\mathcal{A}(y) \mbox{ for all  } n\geq 0].
\]
Since there is only a finite number of cone types it suffices to prove that the latter probability is positive for vertices of large cone types. Let $x$ be such that $T(x)$ is large. Now,  Lemma \ref{lem:stayincone}   implies that there exists some $y\in C_\mathcal{A}(x)$ such that
$\P_{y}[Z_{n}\in C_\mathcal{A}(x) \mbox{ for all } n\geq 0]>0.$ Assumption \ref{ass:2} guarantees that there exists  $n$  such that $\P_{x}[Z_{n}=y,~Z_{k}\in C_\mathcal{A}(x)~\forall k\leq n]>0$. The claim now follows by applying the law of total probability and the  Markov property of the random walk. 
\end{proof}

We will need the following result on the exponential decay of transition probabilities. We do not claim to be the first observing the result in this form but are not aware of  any reference. It is essentially a reformulation of \cite{BC:74} and \cite{DeGu:73}.

\begin{lem}\label{lem:expdecay}
Let $\Gamma$ be an infinite, finitely generated non-amenable group and let $\mu$ be a measure driving the random walk $(Z_{n})_{n\geq 0}$. Then there exists a constant $\varrho<1$ such that for all $x,y\in \Gamma$ and all $n\geq 1$,
\begin{equation*}\label{eq:exponentialdecay}
\P[Z_{n}=x]\leq \varrho^{n}.
\end{equation*}
\end{lem}
\begin{proof}
We consider $P$ as the convolution operator on $\ell_{2}(\Gamma)$ defined by
\begin{equation*}
Pf(x)=\sum_{y} \mu(x^{-1}y) f(y).
\end{equation*}
In the case where $\mu(e)>0$ it is proven in \cite{BC:74} that the operator norm $r:=\| P\|$ of $P$  is strictly less than $1$.  We also refer to \cite{DeGu:73} for related results on the spectral radius of $P$. Hence, 
\begin{equation*}
\sum_{y} (P^{n}f(y))^{2}\leq r^{n} \sum_{y}(f(y))^{2},\quad n\geq 1.
\end{equation*}
Choosing for $f$ the indicator $\delta_{x}$ of a point $x\in \Gamma$ gives
\begin{equation*}
\P[Z_{n}=x]=\mu^{n}(x)\leq (\sqrt{r})^{n}.
\end{equation*}
This finishes the proof with $\varrho=\sqrt{r}$ for the case $\mu(e)>0$.
Let us now treat the general case. Let $\bar \mu:=\frac12 \delta_{e}+\frac12 \mu$ and apply the result from above to $\bar \mu$:
\begin{equation*}
\bar \mu^{n}(x) \leq (\sqrt{\bar{r}})^{n},
\end{equation*}
for some $\bar r<1$. Since $\bar \mu^{n} =\sum_{k=0}^{n} 2^{-n} {n \choose k} \mu^{k}$ we have $\mu^{n}\leq 2^{n} ({2n\choose n})^{-1} (\bar \mu)^{2n}$ and thus 
\begin{equation*}
\mu^{n}(x)\leq \frac{2^{n}}{{2n\choose n}} \bar{r}^{n}.
\end{equation*}
Using Stirling's formula, we find that there exist $n_{0}\in\N$ and $\bar \varrho<1$ such that for all $n\geq n_{0}$ we have that
\begin{equation*}
\mu^{n}(x)\leq \bar \varrho^{n}.
\end{equation*}
Moreover, we have
\begin{equation*}
\sup_{1\leq n \leq n_{0}} \sup_{x} \mu^{n}(x)<1.
\end{equation*}
Indeed, otherwise, $\mu$ would be a Dirac mass and therefore its support would not generate a non-amenable group.
\end{proof}

\subsection{Surface groups}
In general, the geometry of cone types is hardly understood. In order to avoid artificial conditions, we will focus on hyperbolic surface groups. A surface group is the fundamental group of a  closed and orientable surface of genus $2$ or more. The standard presentation for an (orientable)  surface group of genus $g$ is
\[
\langle a_{1}^{\pm 1},b_{1}^{\pm 1},\ldots, a_{g}^{\pm 1}, b_{g}^{\pm 1} \mid \prod_{i=1}^{g} a_{i}b_{i}a_{i}^{-1}b_{i}^{-1}\rangle.
\]
Its Cayley  $2$-complex is the $2$-complex such that the one-skeleton is given by the Cayley graph $X$,  and the $2$-cells are bounded by loops in $X$ labeled by the relations. A surface group with standard presentation is planar, \emph{i.e.,} its 2-complex is homeomorphic to the hyperbolic disc. A strongly automatic structure of a surface group can be given explicitly, \emph{e.g.,} see \cite[Section 5.2]{GL:11}, and in particular there exists an automatic structure associated to the standard generating set that satisfies Assumption \ref{ass:1}. The planarity of the Cayley $2$-complex allows moreover a neat description of the cones and their boundaries.

\begin{lem}\label{lem:coneshape}
Let $(\Gamma,S)$ be surface group with standard generating set. Then, there exists an automatic structure $\mathcal{A}$ that satisfies Assumption \ref{ass:1}. Moreover, any cone type of $\A$ is large and is bounded by two geodesic rays starting from
the neutral element. 
\end{lem}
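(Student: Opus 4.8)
The plan is to establish Lemma~\ref{lem:coneshape} in three stages: first the existence of an automatic structure satisfying Assumption~\ref{ass:1}, then the claim that every recurrent cone type is large, and finally the geometric statement that each cone is bounded by two geodesic rays. For the first stage I would simply invoke the explicit strongly automatic structure for surface groups with the standard generating set recorded in \cite[Section 5.2]{GL:11} (or \cite{Se:82}), which is already noted in the paragraph preceding the lemma to satisfy Assumption~\ref{ass:1}; so the content here is a citation plus the remark that we may pass to the recurrent subgraph $\A_{\mathcal R}$, which is strongly connected. Note also that since $\Gamma$ is a one-ended hyperbolic group, it is non-elementary, so Lemmas~\ref{lem:intnonempty} and \ref{lem:stayincone} are available.

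For the second stage --- ``any cone type of $\A$ is large'' --- the strategy is to combine Lemma~\ref{lem:intnonempty} with the planarity of the Cayley $2$-complex. By Lemma~\ref{lem:intnonempty} it suffices to exhibit \emph{one} large recurrent cone type; since $\A_{\mathcal R}$ is strongly connected every cone type is then large (a transient cone type contains a recurrent one, and a cone type containing a large one is large by the observation in the text). To produce one large recurrent cone, I would argue geometrically: fix a bi-infinite geodesic $\gamma$ in $X$ through $e$; by planarity $\gamma$ separates $X$ into two half-planes, and for a vertex $x$ far out along $\gamma$ the cone $C_\A(x)$ should contain an entire half-plane-like region, hence an open subset of $\partial\Gamma$, so $\partial_\infty C_\A(x)$ has nonempty interior and $T_\A(x)$ is large. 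Because there are only finitely many cone types, infinitely many vertices along $\gamma$ share a cone type, which is therefore recurrent (it accesses itself).

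For the third stage I would use planarity directly. Fix $x\in\Gamma$ and consider $C_\A(x)$: its complement $\Gamma\setminus C_\A(x)$ together with $x$ determines the inner boundary $\partial_\Gamma C_\A(x)$. Since the automatic structure is strongly automatic, each vertex of $\Gamma$ has a unique normal-form geodesic from $e$, and $C_\A(x)$ consists of those vertices whose normal form has the normal form of $x$ as a prefix; in the planar Cayley graph this is a ``convex wedge''. Concretely, the boundary $\partial_\Gamma C_\A(x)$ together with the portion of the geodesic from $e$ to $x$ bounds a region of the plane, and I would show $\partial_\Gamma C_\A(x)$ consists of two infinite arcs which, after passing to geodesic representatives (using that $\delta$-hyperbolicity makes the boundary arcs fellow-travel actual geodesic rays and that the extreme normal-form rays in the cone are themselves geodesics), are the two advertised geodesic rays from $e$. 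The finitely-many-cone-types fact also bounds how these rays behave, giving uniformity.

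The main obstacle is the third stage: turning the loose phrase ``bounded by two geodesic rays'' into a precise statement and proof. One must pin down exactly what object the two rays are (the two ``extreme'' geodesics in the cone, or geodesic rays shadowing the two components of the topological frontier of $C_\A(x)$ in the planar complex), and then verify both that the frontier really has exactly two such components --- this is where planarity of the Cayley $2$-complex is essential, ruling out the cone ``wrapping around'' --- and that each component is uniformly close to, or literally equal to, a genuine geodesic ray emanating from $e$. I expect the cleanest route is to work in the Cayley $2$-complex, use that it is homeomorphic to the hyperbolic disc with $\Gamma$ acting, identify $C_\A(x)$ with the set of group elements lying ``beyond'' $x$, and read off the wedge structure from the cyclic order of edges around vertices; the hyperbolicity then upgrades the two bounding paths to geodesic rays. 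Everything else (stages one and two) is comparatively routine given the cited results and Lemma~\ref{lem:intnonempty}.
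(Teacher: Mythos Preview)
Your three-stage decomposition is more elaborate than the paper's argument, and your stage three --- which you correctly flag as the obstacle --- is missing the idea that makes the paper's proof work. The paper does not analyse $\partial_\Gamma C_\A(x)$ and then try to upgrade its components to geodesics; it goes the other way. Given $x$, take \emph{any} two geodesic rays $c_1,c_2$ issuing from $x$ that extend geodesics from $e$ to $x$. By planarity and Jordan's theorem, every component $V$ of $X\setminus(c_1\cup c_2)$ not containing $e$ lies inside the cone: a geodesic from $e$ to $y\in V$ must cross $c_1$ or $c_2$ at some point $z$ beyond $x$, and swapping its initial segment for the portion of $c_i$ from $e$ to $z$ gives a geodesic from $e$ to $y$ passing through $x$. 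Having shown this for arbitrary such pairs, an Arzel\`a--Ascoli / compactness argument then picks out the two extremal rays $c_\ell,c_r$ so that $C_\A(x)$ is exactly these rays together with the components of their complement that avoid $e$.

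This also makes your stage two redundant. Once each cone is exhibited as a planar wedge between two geodesic rays, its closure at infinity contains an open arc of the boundary circle, so every cone type is large directly --- there is no need to produce one large recurrent type and invoke Lemma~\ref{lem:intnonempty}. (Your stage-two argument, incidentally, is essentially the special case of the paper's Jordan argument where $c_1\cup c_2$ is a bi-infinite geodesic.) Your stage-three outline --- show $\partial_\Gamma C_\A(x)$ has two components and that each fellow-travels a geodesic --- might be completable, but it is both harder and less informative than the paper's route, which never needs to identify the boundary intrinsically: it prescribes geodesics first and shows they bound cone regions.
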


\begin{proof}
We refer to \cite[Section 5.2]{GL:11} for the fact that there exists an automatic structure that satisfies Assumption \ref{ass:1}. 
Let $x\in \Gamma\setminus\{e\}$ and  $C_\mathcal{A}(x)$ its cone defined by the automaton $\mathcal{A}$.  Since the 2-complex is homeomorphic to the plane, it can be endowed with an orientation.   Let $r_1,r_2:\R_+\to X$ be two infinite rays going through $x$ and which coincide up to $x$; let
$c_1,c_2$ be the geodesic rays extracted from $r_1,r_2$ starting at $x$. 
Let $V$ be a component of $X\setminus(c_1\cup c_2)$ which does not
contain $e$. Let us prove that $V$ is contained in $C_\mathcal{A}(x)$: let $y\in V$, and let us consider a segment $c_y$ joining $e$ to $y$.
Since the 2-complex is planar, Jordan's theorem implies that  $c_y$ has to intersect $\partial V$ at a point $z$, 
hence $c_1$ or $c_2$  beyond $x$.
Let us assume that it intersects $c_1$. Since $c_1$ is geodesic, we may replace the portion of $c_y$ before $z$ by $c_1$: 
it follows that the concatenation of $c_1$ up to $z$ and $c_y$ from $z$ to $y$ is geodesic; this implies that $y\in C_\mathcal{A}(x)$.

By Arzela-Ascoli's theorem and the planarity of the graph, we may find two rays $c_{\ell}$ and $c_{r}$ going through $x$ such that 
$C_\mathcal{A}(x)$ is the union of  those rays with all the components of their complement which do not contain $e$. 
\end{proof}

\section{Renewal structure and applications}\label{sec:ren}

\subsection{The construction}
In this section we assume that $\Gamma$ is a non-elementary hyperbolic group endowed with a finite generating set $S$ such that there is
a ubiquitous large cone type $\bT$.
The aim of the following part is  to construct a sequence of renewal times $R_{n}$ on which the random walk visits the root of a cone of type $\bT$ that it will never leave again. 

The main idea behind the construction is quite natural and is first sketched  informally; we also refer to  Figures \ref{fig:first} and \ref{fig:nth} for an illustration. 
The trajectory of the walk will be decomposed into parts of two different types: the ``{\bf e}xploring'' and the ``{\bf d}eciding'' parts. Though, let us fix a large ubiquitous cone type $\bT$ and start a random walk in the origin $e$. After some random time $E$ the random walk will visit a vertex of type $\bT$. At this point the walk may stay  in this cone forever or may leave it after some finite random time $D$. In the first case, we set $E$ to be the first renewal time. In the second case, the random walk after having left the cone at time $D$ will explore the underlying group in order to find another vertex of type $\bf T$. This procedure continues  until the walk decides to stay eventually in one cone of type $\bT$ and hence the first renewal point and renewal time are fixed. The construction of the subsequent renewal points is analogous. Eventually, this procedure decomposes the trajectory into aligned  and independently distributed pieces. However, the distribution of the first piece differs from the distributions of the subsequent ones, since the law of these latter pieces is given by the law conditioned to stay in the cones of the previous renewal points.

%


\begin{figure}[h]
\begin{minipage}[t]{5.5cm}
\begin{pspicture}(0,-2.487)(7.18,2.341)
\pscircle[linewidth=0.0139999995,dimen=outer](2.56,-0.073){2.414}
\psdots[dotsize=0.12](2.56,-0.073)
\psdots[dotsize=0.12](2.96,0.327)
\psarc[linewidth=0.024](1.55,0.937){1.55}{-24.928474}{58.799484}
\rput{-90.0}(4.633,2.827){\psarc[linewidth=0.024](3.73,-0.903){1.43}{124.31509}{211.93068}}
\pscustom[linewidth=0.0139999995]
{
\newpath
\moveto(2.98,0.307)
\lineto(3.0,0.347)
\curveto(3.01,0.367)(3.035,0.402)(3.05,0.417)
\curveto(3.065,0.432)(3.09,0.467)(3.1,0.487)
\curveto(3.11,0.507)(3.13,0.542)(3.14,0.557)
\curveto(3.15,0.572)(3.16,0.612)(3.16,0.637)
\curveto(3.16,0.662)(3.17,0.712)(3.18,0.737)
\curveto(3.19,0.762)(3.225,0.797)(3.25,0.807)
\curveto(3.275,0.817)(3.325,0.832)(3.35,0.837)
\curveto(3.375,0.842)(3.425,0.847)(3.45,0.847)
\curveto(3.475,0.847)(3.525,0.842)(3.55,0.837)
\curveto(3.575,0.832)(3.61,0.842)(3.62,0.857)
\curveto(3.63,0.872)(3.635,0.937)(3.63,0.987)
\curveto(3.625,1.037)(3.605,1.112)(3.59,1.137)
\curveto(3.575,1.162)(3.56,1.212)(3.56,1.237)
\curveto(3.56,1.262)(3.58,1.297)(3.6,1.307)
\curveto(3.62,1.317)(3.67,1.332)(3.7,1.337)
\curveto(3.73,1.342)(3.785,1.352)(3.81,1.357)
\curveto(3.835,1.362)(3.845,1.372)(3.83,1.377)
\curveto(3.815,1.382)(3.775,1.387)(3.75,1.387)
\curveto(3.725,1.387)(3.68,1.397)(3.66,1.407)
\curveto(3.64,1.417)(3.605,1.447)(3.59,1.467)
\curveto(3.575,1.487)(3.565,1.552)(3.57,1.597)
\curveto(3.575,1.642)(3.6,1.697)(3.62,1.707)
\curveto(3.64,1.717)(3.675,1.737)(3.69,1.747)
\curveto(3.705,1.757)(3.74,1.782)(3.76,1.797)
}
\usefont{T1}{ptm}{m}{n}
\rput(1.84,-0.768){$e$}
\psline[linewidth=0.0139999995cm,arrowsize=0.05291667cm 2.0,arrowlength=1.4,arrowinset=0.4]{->}(1.9,-0.533)(2.46,-0.133)
\psline[linewidth=0.0139999995cm,arrowsize=0.05291667cm 2.0,arrowlength=1.4,arrowinset=0.4]{->}(3.7,-0.233)(3.02,0.247)
\usefont{T1}{ptm}{m}{n}
\rput(3.82,-0.448){$Z_{R_1}$}
\pscustom[linewidth=0.0139999995]
{
\newpath
\moveto(2.58,-0.073)
\lineto(2.6,-0.023)
\curveto(2.61,0.0019999999)(2.64,0.037)(2.66,0.047)
\curveto(2.68,0.057)(2.725,0.072)(2.75,0.077)
\curveto(2.775,0.082)(2.82,0.082)(2.84,0.077)
\curveto(2.86,0.072)(2.885,0.042)(2.89,0.016999999)
\curveto(2.895,-0.0080)(2.9,-0.058)(2.9,-0.083)
\curveto(2.9,-0.108)(2.885,-0.148)(2.87,-0.163)
\curveto(2.855,-0.178)(2.825,-0.213)(2.81,-0.233)
\curveto(2.795,-0.253)(2.765,-0.283)(2.75,-0.293)
\curveto(2.735,-0.303)(2.69,-0.323)(2.66,-0.333)
\curveto(2.63,-0.343)(2.57,-0.358)(2.54,-0.363)
\curveto(2.51,-0.368)(2.455,-0.373)(2.43,-0.373)
\curveto(2.405,-0.373)(2.365,-0.353)(2.35,-0.333)
\curveto(2.335,-0.313)(2.31,-0.273)(2.3,-0.253)
\curveto(2.29,-0.233)(2.28,-0.188)(2.28,-0.163)
\curveto(2.28,-0.138)(2.28,-0.088)(2.28,-0.063)
\curveto(2.28,-0.038)(2.28,0.012)(2.28,0.037)
\curveto(2.28,0.062)(2.26,0.092)(2.24,0.097)
\curveto(2.22,0.102)(2.185,0.122)(2.17,0.137)
\curveto(2.155,0.152)(2.125,0.177)(2.11,0.187)
\curveto(2.095,0.197)(2.065,0.217)(2.05,0.227)
\curveto(2.035,0.237)(2.015,0.227)(2.01,0.207)
\curveto(2.005,0.187)(2.0,0.132)(2.0,0.097)
\curveto(2.0,0.062)(2.02,0.012)(2.04,-0.0030)
\curveto(2.06,-0.018)(2.1,-0.043)(2.12,-0.053)
\curveto(2.14,-0.063)(2.185,-0.073)(2.21,-0.073)
\curveto(2.235,-0.073)(2.28,-0.063)(2.3,-0.053)
\curveto(2.32,-0.043)(2.345,-0.013)(2.35,0.0069999998)
\curveto(2.355,0.027)(2.36,0.072)(2.36,0.097)
\curveto(2.36,0.122)(2.36,0.172)(2.36,0.197)
\curveto(2.36,0.222)(2.36,0.272)(2.36,0.297)
\curveto(2.36,0.322)(2.38,0.367)(2.4,0.387)
\curveto(2.42,0.407)(2.46,0.432)(2.48,0.437)
\curveto(2.5,0.442)(2.535,0.432)(2.55,0.417)
\curveto(2.565,0.402)(2.575,0.362)(2.57,0.337)
\curveto(2.565,0.312)(2.55,0.267)(2.54,0.247)
\curveto(2.53,0.227)(2.52,0.182)(2.52,0.157)
\curveto(2.52,0.132)(2.545,0.107)(2.57,0.107)
\curveto(2.595,0.107)(2.645,0.117)(2.67,0.127)
\curveto(2.695,0.137)(2.72,0.172)(2.72,0.197)
\curveto(2.72,0.222)(2.725,0.272)(2.73,0.297)
\curveto(2.735,0.322)(2.74,0.372)(2.74,0.397)
\curveto(2.74,0.422)(2.75,0.467)(2.76,0.487)
\curveto(2.77,0.507)(2.79,0.547)(2.8,0.567)
\curveto(2.81,0.587)(2.84,0.617)(2.86,0.627)
\curveto(2.88,0.637)(2.915,0.627)(2.93,0.607)
\curveto(2.945,0.587)(2.96,0.542)(2.96,0.517)
\curveto(2.96,0.492)(2.96,0.442)(2.96,0.417)
\curveto(2.96,0.392)(2.96,0.362)(2.96,0.347)
}
\usefont{T1}{ptm}{m}{n}
\rput(5.73,1.852){$C_{\mathcal{A}}(Z_{R_{1}})$}
\psline[linewidth=0.0139999995cm,arrowsize=0.05291667cm 2.0,arrowlength=1.4,arrowinset=0.4]{->}(4.62,1.627)(4.12,1.247)
\end{pspicture} \caption{First renewal step}\label{fig:first}
\end{minipage}
\hfill
\begin{minipage}[t]{10cm}
\begin{pspicture}(0,-1.764145)(9.502,2.8464108)
\psarc[linewidth=0.024](3.96,-2.69){3.0}{18.217094}{75.25644}
\psarc[linewidth=0.024](3.0,2.67){3.0}{-54.344673}{2.7591076}
\psarc[linewidth=0.024](6.62,-2.67){3.0}{18.217094}{75.25644}
\psarc[linewidth=0.024](5.66,2.69){3.0}{-54.344673}{2.7591076}
\psdots[dotsize=0.12](4.76,0.23)
\psdots[dotsize=0.12](7.42,0.25)
\pscustom[linewidth=0.024,linestyle=dashed,dash=0.16cm 0.16cm]
{
\newpath
\moveto(3.0,0.73)
\lineto(3.04,0.74)
\curveto(3.06,0.745)(3.105,0.765)(3.13,0.78)
\curveto(3.155,0.795)(3.185,0.84)(3.19,0.87)
\curveto(3.195,0.9)(3.205,0.96)(3.21,0.99)
\curveto(3.215,1.02)(3.23,1.075)(3.24,1.1)
\curveto(3.25,1.125)(3.27,1.18)(3.28,1.21)
\curveto(3.29,1.24)(3.33,1.295)(3.36,1.32)
\curveto(3.39,1.345)(3.445,1.375)(3.47,1.38)
\curveto(3.495,1.385)(3.55,1.39)(3.58,1.39)
\curveto(3.61,1.39)(3.65,1.375)(3.66,1.36)
\curveto(3.67,1.345)(3.685,1.305)(3.69,1.28)
\curveto(3.695,1.255)(3.705,1.21)(3.71,1.19)
\curveto(3.715,1.17)(3.75,1.135)(3.78,1.12)
\curveto(3.81,1.105)(3.87,1.075)(3.9,1.06)
\curveto(3.93,1.045)(3.995,1.03)(4.03,1.03)
\curveto(4.065,1.03)(4.125,1.03)(4.15,1.03)
\curveto(4.175,1.03)(4.22,1.04)(4.24,1.05)
\curveto(4.26,1.06)(4.28,1.1)(4.28,1.13)
\curveto(4.28,1.16)(4.295,1.21)(4.31,1.23)
\curveto(4.325,1.25)(4.355,1.295)(4.37,1.32)
\curveto(4.385,1.345)(4.435,1.38)(4.47,1.39)
\curveto(4.505,1.4)(4.565,1.43)(4.59,1.45)
\curveto(4.615,1.47)(4.67,1.495)(4.7,1.5)
\curveto(4.73,1.505)(4.785,1.51)(4.81,1.51)
\curveto(4.835,1.51)(4.87,1.485)(4.88,1.46)
\curveto(4.89,1.435)(4.9,1.385)(4.9,1.36)
\curveto(4.9,1.335)(4.905,1.285)(4.91,1.26)
\curveto(4.915,1.235)(4.94,1.19)(4.96,1.17)
\curveto(4.98,1.15)(5.025,1.13)(5.05,1.13)
\curveto(5.075,1.13)(5.135,1.13)(5.17,1.13)
\curveto(5.205,1.13)(5.27,1.13)(5.3,1.13)
\curveto(5.33,1.13)(5.39,1.14)(5.42,1.15)
\curveto(5.45,1.16)(5.515,1.175)(5.55,1.18)
\curveto(5.585,1.185)(5.655,1.19)(5.69,1.19)
\curveto(5.725,1.19)(5.795,1.185)(5.83,1.18)
\curveto(5.865,1.175)(5.92,1.15)(5.94,1.13)
\curveto(5.96,1.11)(5.985,1.06)(5.99,1.03)
\curveto(5.995,1.0)(6.0,0.945)(6.0,0.92)
\curveto(6.0,0.895)(6.0,0.83)(6.0,0.79)
\curveto(6.0,0.75)(6.0,0.67)(6.0,0.63)
\curveto(6.0,0.59)(5.99,0.52)(5.98,0.49)
\curveto(5.97,0.46)(5.945,0.395)(5.93,0.36)
\curveto(5.915,0.325)(5.885,0.24)(5.87,0.19)
\curveto(5.855,0.14)(5.81,0.045)(5.78,0.0)
\curveto(5.75,-0.045)(5.695,-0.115)(5.67,-0.14)
\curveto(5.645,-0.165)(5.6,-0.215)(5.58,-0.24)
\curveto(5.56,-0.265)(5.505,-0.305)(5.47,-0.32)
\curveto(5.435,-0.335)(5.37,-0.37)(5.34,-0.39)
\curveto(5.31,-0.41)(5.245,-0.43)(5.21,-0.43)
\curveto(5.175,-0.43)(5.11,-0.43)(5.08,-0.43)
\curveto(5.05,-0.43)(4.99,-0.43)(4.96,-0.43)
\curveto(4.93,-0.43)(4.865,-0.43)(4.83,-0.43)
\curveto(4.795,-0.43)(4.715,-0.43)(4.67,-0.43)
\curveto(4.625,-0.43)(4.545,-0.43)(4.51,-0.43)
\curveto(4.475,-0.43)(4.415,-0.42)(4.39,-0.41)
\curveto(4.365,-0.4)(4.32,-0.37)(4.3,-0.35)
\curveto(4.28,-0.33)(4.24,-0.29)(4.22,-0.27)
\curveto(4.2,-0.25)(4.18,-0.205)(4.18,-0.18)
\curveto(4.18,-0.155)(4.18,-0.105)(4.18,-0.08)
\curveto(4.18,-0.055)(4.2,-0.0050)(4.22,0.02)
\curveto(4.24,0.045)(4.285,0.085)(4.31,0.1)
\curveto(4.335,0.115)(4.385,0.14)(4.41,0.15)
\curveto(4.435,0.16)(4.475,0.185)(4.49,0.2)
\curveto(4.505,0.215)(4.55,0.235)(4.58,0.24)
\curveto(4.61,0.245)(4.665,0.25)(4.69,0.25)
}
\pscustom[linewidth=0.024]
{
\newpath
\moveto(4.78,0.23)
\lineto(4.83,0.23)
\curveto(4.855,0.23)(4.895,0.24)(4.91,0.25)
\curveto(4.925,0.26)(4.965,0.28)(4.99,0.29)
\curveto(5.015,0.3)(5.06,0.32)(5.08,0.33)
\curveto(5.1,0.34)(5.145,0.345)(5.17,0.34)
\curveto(5.195,0.335)(5.235,0.31)(5.25,0.29)
\curveto(5.265,0.27)(5.3,0.245)(5.32,0.24)
\curveto(5.34,0.235)(5.395,0.23)(5.43,0.23)
\curveto(5.465,0.23)(5.505,0.26)(5.51,0.29)
\curveto(5.515,0.32)(5.525,0.385)(5.53,0.42)
\curveto(5.535,0.455)(5.54,0.515)(5.54,0.54)
\curveto(5.54,0.565)(5.56,0.6)(5.58,0.61)
\curveto(5.6,0.62)(5.64,0.61)(5.66,0.59)
\curveto(5.68,0.57)(5.72,0.53)(5.74,0.51)
\curveto(5.76,0.49)(5.78,0.445)(5.78,0.42)
\curveto(5.78,0.395)(5.77,0.345)(5.76,0.32)
\curveto(5.75,0.295)(5.73,0.25)(5.72,0.23)
\curveto(5.71,0.21)(5.685,0.175)(5.67,0.16)
\curveto(5.655,0.145)(5.635,0.105)(5.63,0.08)
\curveto(5.625,0.055)(5.63,0.01)(5.64,-0.01)
\curveto(5.65,-0.03)(5.685,-0.05)(5.71,-0.05)
\curveto(5.735,-0.05)(5.79,-0.05)(5.82,-0.05)
\curveto(5.85,-0.05)(5.905,-0.05)(5.93,-0.05)
\curveto(5.955,-0.05)(5.99,-0.035)(6.0,-0.02)
\curveto(6.01,-0.0050)(6.03,0.035)(6.04,0.06)
\curveto(6.05,0.085)(6.06,0.15)(6.06,0.19)
\curveto(6.06,0.23)(6.07,0.295)(6.08,0.32)
\curveto(6.09,0.345)(6.115,0.39)(6.13,0.41)
\curveto(6.145,0.43)(6.18,0.475)(6.2,0.5)
\curveto(6.22,0.525)(6.255,0.565)(6.27,0.58)
\curveto(6.285,0.595)(6.32,0.625)(6.34,0.64)
\curveto(6.36,0.655)(6.38,0.71)(6.38,0.75)
\curveto(6.38,0.79)(6.38,0.87)(6.38,0.91)
\curveto(6.38,0.95)(6.38,1.02)(6.38,1.05)
\curveto(6.38,1.08)(6.385,1.13)(6.39,1.15)
\curveto(6.395,1.17)(6.43,1.195)(6.46,1.2)
\curveto(6.49,1.205)(6.55,1.21)(6.58,1.21)
\curveto(6.61,1.21)(6.675,1.205)(6.71,1.2)
\curveto(6.745,1.195)(6.815,1.19)(6.85,1.19)
\curveto(6.885,1.19)(6.935,1.23)(6.95,1.27)
\curveto(6.965,1.31)(6.98,1.38)(6.98,1.41)
\curveto(6.98,1.44)(6.99,1.49)(7.0,1.51)
\curveto(7.01,1.53)(7.05,1.56)(7.08,1.57)
\curveto(7.11,1.58)(7.17,1.59)(7.2,1.59)
\curveto(7.23,1.59)(7.29,1.595)(7.32,1.6)
\curveto(7.35,1.605)(7.41,1.6)(7.44,1.59)
\curveto(7.47,1.58)(7.52,1.54)(7.54,1.51)
\curveto(7.56,1.48)(7.585,1.415)(7.59,1.38)
\curveto(7.595,1.345)(7.58,1.295)(7.56,1.28)
\curveto(7.54,1.265)(7.495,1.235)(7.47,1.22)
\curveto(7.445,1.205)(7.4,1.165)(7.38,1.14)
\curveto(7.36,1.115)(7.325,1.08)(7.31,1.07)
\curveto(7.295,1.06)(7.26,1.035)(7.24,1.02)
\curveto(7.22,1.005)(7.195,0.965)(7.19,0.94)
\curveto(7.185,0.915)(7.175,0.865)(7.17,0.84)
\curveto(7.165,0.815)(7.16,0.76)(7.16,0.73)
\curveto(7.16,0.7)(7.175,0.65)(7.19,0.63)
\curveto(7.205,0.61)(7.23,0.575)(7.24,0.56)
\curveto(7.25,0.545)(7.255,0.51)(7.25,0.49)
\curveto(7.245,0.47)(7.215,0.435)(7.19,0.42)
\curveto(7.165,0.405)(7.125,0.37)(7.11,0.35)
\curveto(7.095,0.33)(7.075,0.29)(7.07,0.27)
\curveto(7.065,0.25)(7.075,0.215)(7.09,0.2)
\curveto(7.105,0.185)(7.14,0.165)(7.16,0.16)
\curveto(7.18,0.155)(7.225,0.15)(7.25,0.15)
\curveto(7.275,0.15)(7.32,0.155)(7.34,0.16)
\curveto(7.36,0.165)(7.385,0.19)(7.39,0.21)
\curveto(7.395,0.23)(7.405,0.25)(7.42,0.25)
}
\pscustom[linewidth=0.024,linestyle=dashed,dash=0.16cm 0.16cm]
{
\newpath
\moveto(7.44,0.27)
\lineto(7.49,0.27)
\curveto(7.515,0.27)(7.565,0.27)(7.59,0.27)
\curveto(7.615,0.27)(7.655,0.285)(7.67,0.3)
\curveto(7.685,0.315)(7.725,0.34)(7.75,0.35)
\curveto(7.775,0.36)(7.815,0.38)(7.83,0.39)
\curveto(7.845,0.4)(7.88,0.405)(7.9,0.4)
\curveto(7.92,0.395)(7.96,0.38)(7.98,0.37)
\curveto(8.0,0.36)(8.03,0.325)(8.04,0.3)
\curveto(8.05,0.275)(8.085,0.235)(8.11,0.22)
\curveto(8.135,0.205)(8.185,0.19)(8.21,0.19)
\curveto(8.235,0.19)(8.285,0.19)(8.31,0.19)
\curveto(8.335,0.19)(8.385,0.195)(8.41,0.2)
\curveto(8.435,0.205)(8.475,0.235)(8.49,0.26)
\curveto(8.505,0.285)(8.525,0.34)(8.53,0.37)
\curveto(8.535,0.4)(8.54,0.47)(8.54,0.51)
\curveto(8.54,0.55)(8.55,0.625)(8.56,0.66)
\curveto(8.57,0.695)(8.58,0.76)(8.58,0.79)
\curveto(8.58,0.82)(8.61,0.855)(8.64,0.86)
\curveto(8.67,0.865)(8.725,0.875)(8.75,0.88)
\curveto(8.775,0.885)(8.815,0.875)(8.83,0.86)
\curveto(8.845,0.845)(8.875,0.81)(8.89,0.79)
\curveto(8.905,0.77)(8.93,0.73)(8.94,0.71)
\curveto(8.95,0.69)(8.97,0.65)(8.98,0.63)
\curveto(8.99,0.61)(9.025,0.575)(9.05,0.56)
\curveto(9.075,0.545)(9.12,0.525)(9.14,0.52)
\curveto(9.16,0.515)(9.19,0.54)(9.2,0.57)
\curveto(9.21,0.6)(9.23,0.675)(9.24,0.72)
\curveto(9.25,0.765)(9.27,0.835)(9.28,0.86)
\curveto(9.29,0.885)(9.3,0.95)(9.3,0.99)
\curveto(9.3,1.03)(9.3,1.1)(9.3,1.13)
\curveto(9.3,1.16)(9.315,1.2)(9.33,1.21)
\curveto(9.345,1.22)(9.38,1.235)(9.4,1.24)
\curveto(9.42,1.245)(9.465,1.25)(9.49,1.25)
}
\usefont{T1}{ptm}{m}{n}
\rput(4.91,-1.045){$Z_{R_{n-1}}$}
\psline[linewidth=0.0139999995cm,arrowsize=0.05291667cm 2.0,arrowlength=1.4,arrowinset=0.4]{->}(4.3,-0.81)(4.7,0.11)
\psline[linewidth=0.0139999995cm,arrowsize=0.05291667cm 2.0,arrowlength=1.4,arrowinset=0.4]{->}(7.5,-0.83)(7.42,0.07)
\usefont{T1}{ptm}{m}{n}
\rput(7.78,-1.065){$Z_{R_n}$}
\end{pspicture}  \caption{$n$th renewal step}\label{fig:nth}
\end{minipage}
\end{figure}

We now present the details of the construction.
Let $$E=\inf\{n\geq 0\,;\,T_{\mathcal{A}}(Z_{n})=\bT\}$$ the first time the random walk visits a vertex of type $\bT$. The random variable $E$ is a stopping time and a priori takes  values in $\N\cup\{\infty\}$. However, since $\bT$ is ubiquitous it can be shown that $E$ is almost surely finite, see proof of Lemma \ref{lem:R_1finite}. Recall that  $\theta$ is  the canonical shift on the space of trajectories $\mathcal{T}$ and  thus $$ E\circ \theta^{k}=\inf\{n\geq 0\,;\,T_{\mathcal{A}}(Z_{n+k})=\bT\}.$$
Define the stopping time
$$ D=\inf\{n\geq 1\,;\,Z_{n}\notin C_{\mathcal{A}}(Z_{0})\}$$ and  consider its shifted versions
$$ D\circ \theta^{k}=\inf\{n\geq 1\,;\,Z_{n+k}\notin C_{\mathcal{A}}(Z_{k})\}.$$
Observe that the random variables $D\circ \theta^{k}$ might be finite or infinite, see Lemma \ref{lem:stayinconegen}. We define 
\[R_{0}=0.\]
In order to define the subsequent renewal times we  introduce a sequence of stopping times $(S_{k}^{(1)})_{k\geq 0}$:
$$S_{0}^{(1)}=E~\mbox{and inductively}~S_{k+1}^{(1)}=S_{k}^{(1)}+ D_{k}^{(1)} + E_{k}^{(1)}\leq \infty,$$ where $$ D_{k}^{(1)} = D\circ \theta ^{{S^{(1)}_{k}}}\mbox{ and } E_{k}^{(1)}= E\circ\theta^{{S^{(1)}_{k}}+D_{k}^{(1)} }.$$
Letting 
\begin{equation*}\label{eq:K}
K^{(1)}=\inf\{k\geq 0\,;\,S_{k}^{(1)}<\infty, S_{k+1}^{(1)}=\infty\}\leq\infty
\end{equation*}
 we define the first renewal time
$$R_{1}=S_{K^{(1)}}^{(1)}\leq \infty.$$ 
Equivalently, this renewal time  can be written as 
$$ R_{1}=\inf\{{k\geq 0}\,;\,Z_{i}\in C_{\mathcal{A}}(Z_{k})~\forall i\geq k,~T_{\mathcal{A}}(Z_{k})=\bT \}.$$ 
In words, $R_{1}$ is the first time the random walk hits the root of a cone of type $\bT$  that it never leaves afterwards.
Note that $R_{1}$ is not a stopping time.

Inductively, we define the  $n$th renewal time. Provided that $R_{n-1}<\infty$ we define as above:
$$S^{(n)}_{0}=R_{n-1}+ 1+ E\circ \theta^{R_{n-1}+1}~\mbox{and inductively}~S_{k+1}^{(n)}=S_{k}^{(n)}+ D_{k}^{(n)} + E_{k}^{(n)}\leq \infty,$$ 
where 
$$ D_{k}^{(n)} = D\circ \theta ^{{S^{(n)}_{k}}}\mbox{ and } E_{k}^{(n)}= E\circ\theta^{{S^{(n)}_{k}}+D_{k}^{(n)} }.$$ 
Letting $K^{(n)}=\inf\{k\geq 0\,;\,S^{(n)}_{k}<\infty, S^{(n)}_{k+1}=\infty\}\leq\infty$ we can define the $n$th renewal time
$$R_{n}=S^{(n)}_{K^{(1)}}\leq \infty,$$ which is the same as 
$$ R_{n}=\inf\{k> R_{n-1}\,;\,Z_{i}\in C_{\mathcal{A}}(Z_{k})~\forall i\geq k,~T_{\mathcal{A}}(Z_{k})=\bT\}.$$ 

Without any further assumption, we have the following basic result.
\begin{thm}\label{thm:reg}
Let $(\Gamma, S)$ be a non-elementary hyperbolic group and $\mathcal{A}$ a corresponding automatic structure with a large ubiquitous 
cone type $\bT$. Let $\mu$ be a driving measure satisfying Assumption \ref{ass:2}. Then,  the renewal times $R_{n}$ are almost surely finite and 
$d(e,Z_{R_{n}})=\sum_{i=1}^{n} d(Z_{R_{i-1}}, Z_{R_{i}})$, where $d((Z_{R_{i-1}}, Z_{R_{i}}))_{i\geq 2}$ are i.i.d.~random variables.
\end{thm}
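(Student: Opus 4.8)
The plan is to treat the three claims in turn, reducing the almost-sure finiteness of the $R_n$ and the i.i.d.\ property to a single ``regeneration'' statement about the trajectory after $R_1$. The distance identity is essentially free: by construction $Z_{R_{n-1}}$ is the root of a cone of type $\bT$ which the walk never leaves after time $R_{n-1}$, so $Z_{R_n}\in C_\mathcal A(Z_{R_{n-1}})$ (with the convention $R_0=0$ and $C_\mathcal A(Z_{R_0})=\Gamma$, for which the inclusion is trivial); since a point of $C_\mathcal A(x)$ is represented by a path of $\mathcal A$ having the $\mathcal A$-path of $x$ as a prefix and paths of $\mathcal A$ map to geodesics of $\Gamma$, we get $d(e,Z_{R_n})=d(e,Z_{R_{n-1}})+d(Z_{R_{n-1}},Z_{R_n})$, and summing over $n$ yields $d(e,Z_{R_n})=\sum_{i=1}^n d(Z_{R_{i-1}},Z_{R_i})$.

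For $R_1<\infty$ I would combine two inputs. First, $E<\infty$ $\P_x$-a.s.\ for every $x$ (indeed with exponentially small tails): ubiquity of $\bT$ gives an $R$ such that every ball of radius $R$ contains a vertex of type $\bT$, and Assumption~\ref{ass:2} ensures the walk traverses any word of length $\le R$ with probability at least $\epsilon_0:=(\min_{s\in S}\mu(s))^R>0$, so from any position it reaches a $\bT$-vertex within $R$ steps with probability $\ge\epsilon_0$; inspecting the instants $0,R,2R,\dots$ gives $\P_x[E>mR]\le(1-\epsilon_0)^m$ (this is the content of \lemref{lem:R_1finite}). Second, $S^{(1)}_0=E,S^{(1)}_1,\dots$ are genuine stopping times, and whenever $S^{(1)}_k<\infty$ the walk sits at a vertex of type $\bT$, so by the strong Markov property and \lemref{lem:stayinconegen} the conditional probability that it never leaves that cone --- equivalently $D^{(1)}_k=\infty$, equivalently $S^{(1)}_{k+1}=\infty$ --- is at least $c>0$, while on the complementary event $D^{(1)}_k<\infty$ and then $E^{(1)}_k<\infty$ by the first input, so $S^{(1)}_{k+1}<\infty$. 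Hence $\P[K^{(1)}>N]\le(1-c)^{N+1}$, so $K^{(1)}<\infty$ a.s.\ and $R_1=S^{(1)}_{K^{(1)}}<\infty$ a.s.; the same argument gives $R_1<\infty$ $\P_x$-a.s.\ for every starting point $x$.

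For the remaining two claims the key step is a regeneration lemma. For each $k$ one verifies the deterministic decomposition $\{R_1=k\}=H_k\cap\{Z_k^{-1}Z_{k+j}\in\bT\ \forall j\ge0\}$, where $H_k\in\sigma(Z_0,\dots,Z_k)$ encodes ``$T_\mathcal A(Z_k)=\bT$, and every earlier vertex of type $\bT$ has already been left by time $k$''; here one uses the nesting $y\in C_\mathcal A(x)\Rightarrow C_\mathcal A(y)\subseteq C_\mathcal A(x)$, so that staying in $C_\mathcal A(Z_k)$ forever cannot undo an earlier escape. Inserting this splitting into the ordinary Markov property at the \emph{deterministic} time $k$ and using left-invariance of $\mu$, one finds that, conditionally on the trajectory up to $R_1$, the re-centered future $(Z_{R_1}^{-1}Z_{R_1+j})_{j\ge0}$ has the law of a $\mu$-random walk started at $e$ and conditioned to remain in $\bT$ forever --- a law independent of both $k$ and the past. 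This conditioned walk is again Markov, and it inherits ubiquity of $\bT$ inside $\bT$ together with the uniform bound of \lemref{lem:stayinconegen} (if $x$ is a $\bT$-vertex with $C_\mathcal A(x)\subseteq\bT$ then $\P_x[\text{stay in }C_\mathcal A(x)]/\P_x[\text{stay in }\bT]\ge c$), so the argument of the previous paragraph applies to it verbatim and shows that \emph{its} first renewal time is a.s.\ finite. Consequently, if the set of renewal-type times were finite, its last element $R_m$ would produce, after re-centering, a $\mu$-walk conditioned to stay in $\bT$ with no renewal-type time of its own, which is impossible; hence $R_n<\infty$ a.s.\ for all $n$. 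The same self-similarity, applied after $R_1,R_2,\dots$ in turn, shows that the blocks $(Z_{R_{i-1}}^{-1}Z_{R_{i-1}+\cdot},\,R_i-R_{i-1})$ for $i\ge2$ are i.i.d.\ (each with the law of the first renewal block of the $\mu$-walk conditioned to stay in $\bT$), whence in particular the increments $d(Z_{R_{i-1}},Z_{R_i})$, $i\ge2$, are i.i.d.

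The main obstacle is exactly the non-Markovian nature of $R_1$: since $R_1$ is not a stopping time, the regeneration lemma cannot be read off from the strong Markov property and must instead be extracted from the explicit event splitting $\{R_1=k\}=H_k\cap\{\text{future stays in }\bT\}$ followed by a summation over $k$. Verifying that splitting (the ``an earlier cone has been left by time $k$'' bookkeeping, which relies on the nesting of cones) and checking that ubiquity and \lemref{lem:stayinconegen} survive the conditioning on $\{$stay in $\bT$ forever$\}$ --- the step that makes the induction on $n$ self-contained --- are the two points demanding genuine care; everything else is routine.
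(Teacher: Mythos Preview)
Your proof is correct and lands on the same regeneration statement as the paper (its Lemma~\ref{lem:markov}), but you reach it by a different decomposition. The paper never splits $\{R_1=k\}$ over deterministic times; instead it uses the stopping times $S_l=S_l^{(1)}$ already built into the construction and writes
\[
\{R_1=S_l\}=\{S_l<\infty\}\cap\{D\circ\theta^{S_l}=\infty\},
\]
then applies the \emph{strong} Markov property at $S_l$. This avoids entirely your bookkeeping event $H_k$ and the appeal to the nesting property $y\in C_{\mathcal A}(x)\Rightarrow C_{\mathcal A}(y)\subset C_{\mathcal A}(x)$: with the $S_l$-decomposition the past/future split is automatic. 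Your route is the classical Sznitman--Zerner style splitting at deterministic times; it is perfectly valid here because the nesting of the automaton cones is immediate, but it trades one elementary geometric check for the convenience of having the $S_l$ already on the table.

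For the finiteness of $R_n$, $n\ge 2$, the paper takes a shorter path than you do: rather than re-running the $E/D$ argument for the $h$-conditioned walk (and checking that ubiquity and the bound of Lemma~\ref{lem:stayinconegen} survive the conditioning), it simply notes that $\Q_{\bT}$ is $\P_x[\,\cdot\mid D=\infty]$ for a $\bT$-vertex $x$, hence absolutely continuous with respect to $\P_x$, and invokes Lemma~\ref{lem:R_1finite} under $\P_x$. Your verification that the conditioned walk again satisfies the hypotheses is correct but is the more laborious of the two options; the absolute-continuity shortcut would spare you that paragraph.
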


In order to prove Theorem \ref{thm:reg} we first prove two lemmata.
\begin{lem}\label{lem:R_1finite}
Under the assumption of Theorem \ref{thm:reg} the random variable  $R_1$ is almost surely finite under $\P_{x}$ for any $x\in\Gamma$.
\end{lem}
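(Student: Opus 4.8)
The plan is to show that $R_1$ is finite by exhibiting, with positive probability bounded below uniformly over the relevant configurations, an event on which the random walk both visits a vertex of type $\bT$ in bounded time and then stays inside that cone forever. First I would argue that $E$ is a.s.\ finite: since $\bT$ is ubiquitous, there is a radius $R$ such that every ball $B_R(x)$ contains a vertex of type $\bT$. By irreducibility (the support of $\mu$ generates $\Gamma$ as a semigroup, and in fact contains $S$ by Assumption~\ref{ass:2}), from any position $Z_k$ the walk has probability at least some $\delta>0$ of reaching a prescribed type-$\bT$ vertex within $B_R(Z_k)$ in at most $R$ steps; iterating this bound along blocks of $R$ steps gives that the first visit time to a type-$\bT$ vertex is stochastically dominated by a geometric number of such blocks, hence a.s.\ finite. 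Starting from an arbitrary $x\in\Gamma$ the same argument applies, so $E$ (and each shifted copy $E\circ\theta^{m}$) is a.s.\ finite.

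The core of the proof is a geometric-trials argument on the sequence $(S^{(1)}_k)_{k\ge 0}$. At each stage $S^{(1)}_k<\infty$, the walk sits at the root $Z_{S^{(1)}_k}$ of a cone of type $\bT$, which is large by hypothesis. By Lemma~\ref{lem:stayinconegen} there is a constant $c>0$, independent of the location, such that the walk started at such a root stays in that cone for all future times with probability at least $c$. On this event $D^{(1)}_k=\infty$, hence $S^{(1)}_{k+1}=\infty$ and $K^{(1)}=k$, so $R_1=S^{(1)}_k<\infty$. If instead the walk leaves the cone (probability at most $1-c$), then $D^{(1)}_k<\infty$ and, by the previous paragraph applied from the exit point, $E^{(1)}_k<\infty$, so $S^{(1)}_{k+1}<\infty$ and the procedure continues to stage $k+1$. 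The key point is that the conditional probability of ``staying forever this time'' given the past is bounded below by $c$ at every stage — this follows from the strong Markov property at the stopping time $S^{(1)}_k$ together with the translation invariance of the walk, which reduces the estimate to Lemma~\ref{lem:stayinconegen} and makes the bound $c$ uniform. Therefore $K^{(1)}$ is stochastically dominated by a geometric random variable with parameter $c$, so $K^{(1)}<\infty$ a.s., and on $\{K^{(1)}=k\}$ we have $R_1=S^{(1)}_k$, which is finite because it is a finite sum of the a.s.-finite increments $E$, $D^{(1)}_j$, $E^{(1)}_j$ for $j<k$. Hence $R_1<\infty$ $\P_x$-a.s.\ for every $x$.

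The main obstacle I anticipate is making the ``uniform lower bound $c$ at every stage'' rigorous despite the fact that $S^{(1)}_k$ is a genuine stopping time but the event $\{D^{(1)}_k=\infty\}$ refers to the whole future: one must carefully apply the strong Markov property at $S^{(1)}_k$, express the event $\{S^{(1)}_{k+1}=\infty\}\supseteq\{D^{(1)}_k=\infty\}$ in terms of the post-$S^{(1)}_k$ trajectory, and then invoke Lemma~\ref{lem:stayinconegen}, using that the conclusion there is stated uniformly over all $x$ whose cone type is large. A secondary technical point is checking measurability and finiteness of the nested composition of $E$ and $D$ with shifts, i.e.\ that on the event that infinitely many $S^{(1)}_k$ are finite we would need all these increments finite, and conversely that $\{K^{(1)}=\infty\}$ has probability zero — but this is exactly what the geometric domination delivers. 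I would also record that the same argument, applied after $R_{n-1}$ with the extra deterministic ``$+1$'' shift, gives finiteness of each $R_n$ and sets up the i.i.d.\ structure needed for Theorem~\ref{thm:reg}, though for the present Lemma only the case $n=1$ is claimed.
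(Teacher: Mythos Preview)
Your proposal is correct and follows essentially the same approach as the paper: both arguments first show that $E$ is a.s.\ finite using ubiquity of $\bT$ and irreducibility, then apply the strong Markov property at the stopping times $S_k^{(1)}$ together with Lemma~\ref{lem:stayinconegen} to get a uniform lower bound $c$ on $\P_x[D_k^{(1)}=\infty\mid S_k^{(1)}<\infty]$, yielding geometric domination of $K^{(1)}$. The only cosmetic difference is that the paper phrases the conclusion as $\P_x[S_k^{(1)}<\infty\ \forall k\le N]\le(1-c)^N$ rather than explicitly invoking a geometric random variable, but the content is identical.
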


\begin{proof}
By  irreducibility of the random walk and ubiquity of $\bT$ 
there exist $c>0$ and  $m\in\N$ such that 
$$\P_{y}[\exists n\in [0, m-1]:~T(Z_{n})=\bT]>c>0 \mbox{ for all } y\in \Gamma.$$ 
Hence, by  the Markov property we  have
\begin{equation}\label{eq:Eexpmom}
\P_{y}[E\geq N m] \leq (1-c)^{N},
\end{equation}
and hence that $\P_{y}[E<\infty]=1$ for all $y\in\Gamma$. Fix $x\in\Gamma$.
Since we are dealing with stopping times, for any $y\in\Gamma$, the law of $E_k^{(1)}$
conditioned on $\{S_{k}^{(1)}<\infty,   D_{k}^{(1)}<\infty, Z_{S_{k}^{(1)} +  D_{k}^{(1)}}=y\}$ is  the law of $E$ under $\P_{y}$.
Therefore $$\P_{x}[E_{k}^{(1)}<\infty\mid S_{k}^{(1)}<\infty, D_{k}^{(1)}<\infty]=1.$$
Since the law of $D_{k}^{(1)}$ conditioned on $\{S_{k}^{(1)}<\infty, Z_{S_{k}^{(1)}}\}$ is the law of $D$ under $\P_{y}$ for $y$ such that $T(y)=\bT$ we obtain using Lemma \ref{lem:stayinconegen} that
 $$\P_{x}[ D_{k}^{(1)}<\infty\mid S_{k}^{(1)}<\infty]\leq 1-c.$$
 Therefore,
 \[
 \P_{x}[S_{k+1}^{(1)}<\infty \mid S_{k}^{(1)}<\infty ] =  \P_{x}[D_{k}^{(1)}<\infty \mid S_{k}^{(1)}<\infty ] \leq 1-c.
 \]
 Hence,  by the strong Markov property we obtain for all $N\in\N$ 
\begin{equation}\label{eq:lem:Rfinite2}
  \P_{x}[S^{(1)}_{k}<\infty~\forall k\leq N] \leq  (1-c)^{N}. 
\end{equation} and hence $\P_{x}[R_{1}=\infty]=\P_{x}[K^{(1)}=\infty]=0$. 
\end{proof}

A main feature of the definition of the cones is the following property: for  any $x,y\in\Gamma$ with same  cone type and any $A\subset \mathcal{T}$ we have that
\[
 \P_x[ (x^{-1}Z_{n})_{n\in \N }\in A \mid D=\infty]=\P_{y} [(y^{-1}Z_{n})_{n\in \N }\in A  \mid D=\infty].\]
 Therefore, we may introduce a new probability measure: for $A\subset \mathcal{T}$ let
 \[ \Q_{\bT}[ (Z_{n})_{n\in\N}\in A]= \P_{x}[ (x^{-1}Z_{n})_{n\in \N }\in A \mid D=\infty],
 \] where $x$ is of cone type $\bT$. We write $\E_{\bT}$ for the corresponding expectation.

 Define the  $\sigma$-algebras $$\GG_{n}=\sigma(R_{1},\ldots, R_{n}, Z_{0}, \ldots, Z_{R_{n}})\quad n\geq 1.$$
Although the $R_{n}$ are not stopping times we have the following ``Markov property''.
\begin{lem}\label{lem:markov}
Let $\bT$ be some ubiquitous large cone type. Then, for all $n\geq 1$ we have that $R_{n}$ is almost surely finite and for any measurable set $A\subset\T$ and any $y\in\Gamma$
$$ \P_{y}[ (Z_{R_{n}}^{-1}Z_{R_{n}+k})_{k\in\N}\in  A\mid \GG_{n}] = \Q_{\bT} [ (Z_{k})_{k\in\N}\in A].$$
\end{lem}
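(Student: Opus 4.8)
The plan is to proceed by induction on $n$, the case $n=1$ being essentially Lemma \ref{lem:R_1finite} together with the definition of $\Q_{\bT}$. For the inductive step, first I would establish the almost sure finiteness of $R_n$ by exactly the argument in Lemma \ref{lem:R_1finite}: conditionally on $\{R_{n-1}<\infty\}$, the sequence $(S^{(n)}_k)_k$ is built from the same stopping-time recipe (an exploring stretch $E$ followed by a deciding stretch $D$), and by Lemma \ref{lem:stayinconegen} each successive deciding stretch has probability at least $c$ of being infinite, so $K^{(n)}<\infty$ almost surely; combined with the induction hypothesis $\P_y[R_{n-1}<\infty]=1$ this gives $\P_y[R_n<\infty]=1$. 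The point worth care here is that the strong Markov property is applied at the \emph{stopping} times $S^{(n)}_k$, not at the $R_i$'s themselves — this is why the construction interposes the stopping times and is the whole reason the argument goes through.

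For the identity itself, the idea is to decompose according to the value of $R_n$ and the configuration of the trajectory up to time $R_n$. Fix $k\in\N$ and a configuration $(z_0,\ldots,z_k)$ of a path ending at $z_k$ with $T_\mathcal{A}(z_k)=\bT$; on the event $E_{k}:=\{R_n=k,\ Z_0=z_0,\ldots,Z_k=z_k\}$, which is $\GG_n$-measurable up to such elementary building blocks, I want to identify the conditional law of $(Z_k^{-1}Z_{k+j})_{j\ge0}$. The crucial structural fact is that $\{R_n=k\}$ splits into a part describing the walk \emph{before} time $k$ (namely that $k$ is reached as the end of the last deciding excursion, which depends only on $Z_0,\ldots,Z_k$ and on the \emph{event} that certain future excursions out of earlier cones were finite) and the part $\{Z_i\in C_\mathcal{A}(Z_k)\ \forall i\ge k\}=\{D\circ\theta^k=\infty\}$, together with $T_\mathcal{A}(Z_k)=\bT$. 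Here one must be slightly careful: the event $\{R_n=k\}$ does involve the future through the requirement that the walk never leaves $C_\mathcal{A}(Z_k)$ after time $k$ (and that it did leave all the earlier candidate cones). The first of these is precisely the conditioning event $\{D=\infty\}$ shifted to time $k$; the second refers only to cones rooted before time $k$ and is therefore, given $Z_0=z_0,\ldots,Z_k=z_k$, an event measurable with respect to the pre-$k$ trajectory once we also know the post-$k$ trajectory stays in $C_\mathcal{A}(z_k)$ — and crucially \emph{staying inside $C_\mathcal{A}(z_k)$ after time $k$ automatically implies staying outside all the earlier candidate cones} because $z_k$ is a branch point, so those two future constraints are not in competition. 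Thus, writing $\P_y[\,\cdot\mid E_k]$ and using the Markov property of $(Z_n)$ at the deterministic time $k$, the conditional law of the future is $\P_{z_k}[(z_k^{-1}Z_j)_j\in\cdot \mid D=\infty]$, which by definition of $\Q_{\bT}$ (and the homogeneity of cone types, $T_\mathcal{A}(z_k)=\bT$) equals $\Q_{\bT}[(Z_j)_j\in A]$, independently of $k$ and of $(z_0,\ldots,z_k)$. Averaging over all such elementary $\GG_n$-building blocks then yields the claim for general $\GG_n$-measurable conditioning, and in particular shows the right-hand side does not depend on $y$.

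The main obstacle, and the step I would spend the most effort on, is making rigorous the decomposition of $\{R_n=k\}$ into a "past" part and the "future" part $\{D\circ\theta^k=\infty\}$, i.e. verifying that conditionally on $Z_0,\ldots,Z_k$ the remaining randomness before time $k$ is independent of the post-$k$ trajectory (trivially true at a deterministic time $k$) \emph{and} that the event "the walk escaped every earlier candidate cone" is not an extra constraint on the future once we know the future stays in $C_\mathcal{A}(Z_k)$. This last point is exactly where planarity and the geometry of cones from Lemma \ref{lem:coneshape} enter: the cone $C_\mathcal{A}(Z_k)$ is disjoint from (or contained complementarily to) each earlier candidate cone that the walk was required to leave, so "never leaving $C_\mathcal{A}(Z_k)$" forces "having left" each earlier one. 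Once this is pinned down, everything else is bookkeeping with the Markov property and the definition of $\Q_{\bT}$.
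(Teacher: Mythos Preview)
Your overall strategy---fix a deterministic time $k$ and a path $(z_0,\ldots,z_k)$, argue that $\{R_n=k,\,Z_0=z_0,\ldots,Z_k=z_k\}$ is either empty or equals $\{Z_0=z_0,\ldots,Z_k=z_k\}\cap\{D\circ\theta^k=\infty\}$, and then apply the simple Markov property at the fixed time $k$---is a legitimate route, and it is genuinely different from the paper's. The paper instead sums over the \emph{stopping times} $S_l^{(n)}$, uses the identity $\{R_n=S_l^{(n)}\}=\{S_l^{(n)}<\infty\}\cap\{D\circ\theta^{S_l^{(n)}}=\infty\}$ built into the construction, and applies the strong Markov property at $S_l^{(n)}$; the dependence on $R_{n-1}$ is carried by induction through $\GG_{n-1}$-measurability. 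The paper's splitting into past and future is thus immediate from the definition of the $S_l$'s and needs no geometric input, whereas your deterministic-time splitting does need one geometric fact---but once that fact is in place your argument treats all $n$ uniformly.

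The geometric fact you invoke, however, is mis-identified. The events ``the walk left each earlier candidate cone $C_\mathcal{A}(Z_{S_j^{(m)}})$'' are \emph{entirely in the past}: those exits occurred at the times $S_j^{(m)}+D_j^{(m)}<k$ and are determined by $(z_0,\ldots,z_k)$ alone, with no future condition whatsoever. Your claim that $C_\mathcal{A}(z_k)$ is disjoint from those earlier candidate cones is both unnecessary and generally false (the walk may well have re-entered one of them, with $z_k$ lying inside it). The genuine residual future constraint hidden in $\{R_n=k\}$ is the opposite one: the walk must remain \emph{inside} each earlier \emph{renewal} cone $C_\mathcal{A}(z_{R_i})$, $i<n$, for all times beyond $k$. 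This is what must be shown redundant given $\{D\circ\theta^k=\infty\}$, and the reason is cone \emph{nesting}: since $z_k\in C_\mathcal{A}(z_{R_i})$ (the walk was in there from $R_i$ to $k$), one has $C_\mathcal{A}(z_k)\subset C_\mathcal{A}(z_{R_i})$. Nesting follows directly from the definition of cones in any strongly automatic structure (the $\mathcal{A}$-path to a point of $C_\mathcal{A}(z_k)$ has the $\mathcal{A}$-path to $z_k$ as a prefix, which in turn has the $\mathcal{A}$-path to $z_{R_i}$ as a prefix) and requires neither planarity nor Lemma~\ref{lem:coneshape}. With this correction your argument goes through in the full generality of the lemma.
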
 
 
\begin{proof} Without loss of generality let us assume that $y=e$. Besides the finiteness of the $R_{n}$ we have to check the definition of  the conditional expectation:  for all bounded $\GG_n$-measurable function $H$
and all measurable set $A\subset \T$ it holds that $$\E[H \1_{(Z_{R_n}^{-1}Z_{R_n+k})_{k}\in  A} ]= \Q_{\bT}[(Z_k)_k\in A]\cdot \E[H].$$

We will proceed by induction. So let us consider the case $n=1$. Lemma \ref{lem:R_1finite} implies that $\P[R_{1}<\infty]=1$. Now, we  observe that $\{R_1= S_l\}=\{S_l <\infty\}\cap \{D\circ{\theta^{S_l}}=\infty\}$.
Let $l\in\N$ and $x\in\Gamma$. Then, there exists (due to $\GG_{1}$-measurability) some random variable $H_{x,l}$ measurable with respect to $\{Z_{i}\}_{i\leq S_{l}}, S_{l}\}$ such that $H=H_{x,l}$ on the event $\{R_{1}=S_{l}, Z_{S_{l}}=x\}$.  Therefore, we may write
\begin{eqnarray*}
\E [H \1_{(Z_{R_1}^{-1}Z_{R_1+k})_k\in A}] 
&=& \sum_{l\geq 1} \sum_{x\in\Gamma} \E[\1_{S_{l}<\infty} \1_{D\circ \theta^{S_{l}}=\infty}   \1_{Z_{S_{l}}=x} \1_{{(Z_{S_l}^{-1}Z_{S_l+k})_k\in A}} H_{x,l} ] \cr
&=& \sum_{l\geq 1}\sum_{x\in\Gamma \atop T(x)=\bT} \E[\1_{S_{l}<\infty}  \1_{Z_{S_{l}}=x}  H_{x,l} ] \E_{x}[\1_{D=\infty} \1_{(x^{-1}Z_k)_k \in A}] \cr
&=& \Q_{\bT}[(Z_k)_k\in A]  \sum_{l\geq 1} \sum_{x\in\Gamma \atop T(x)=\bT} \E[\1_{S_{l}<\infty}  \1_{Z_{S_{l}}=x}  H_{x,l} ] \P_x[D=\infty]\,.
\end{eqnarray*} 
In the second equality we have used the strong Markov property since $S_l$ is a stopping time, and in the third we have applied the definition of $\Q_\bT$ as a conditional probability. 
Substituting in the above a trivial $A$ we have
$$ \E[H] = \sum_{l\geq 1} \sum_{x\in\Gamma \atop T(x)=\bT} \E[\1_{S_{l}<\infty}  \1_{Z_{S_{l}}=x}  H_{x,l} ] \P_x[D=\infty]$$
which shows that
  \begin{eqnarray*}\E [H \1_{(Z_{R_1}^{-1}Z_{R_1+k})_k\in A}] 
   & = &  \Q_{\bT}[(Z_k)_k\in A]\cdot  \E[H ].
  \end{eqnarray*} 
This concludes the proof for $n=1$ and implies the finiteness of $R_{2}$ since now $\P[R_{2}<\infty]=\Q_{\bT}[R_{1}<\infty]$ and due to Lemma \ref{lem:R_1finite} the latter probability is equal to one.

The induction proceeds similarly. If $H$ is $\GG_n$-measurable, then,  there exists some random variable $H_{x,l}$ measurable with respect to $\GG_{n-1}$ such that $H=H_{x,l}$ on the event $\{R_{n}=S^{(n)}_{l}, Z_{S^{(n)}_{l}}=x\}$, to which we may apply the induction hypotheses.
The computations are left to the reader. 
 \end{proof}

\begin{proof} (Theorem \ref{thm:reg})
By Lemma \ref{lem:markov} the  renewal times $R_{n}$ are almost surely finite and, by construction, all renewal points $Z_{R_{n}}$ lie on one geodesic.  Hence, $d(Z_{R_{n}},e)=\sum_{i=1}^{n} d(Z_{R_{i}}, Z_{R_{i-1}})$. Eventually, 
Lemma \ref{lem:markov} implies that $(d(Z_{R_{i}}, Z_{R_{i-1}}))_{i\geq 2}$ all have the same distribution and are independent.
\end{proof} 
 \begin{rem} 
The renewal structure  yields an alternative construction of the law of the walk. Let $Q_0$ be the law of $(Z_n;n\leq R_{1})$ under $\P_x$ and let $Q$ be the law of $(Z_{R_1}^{-1}Z_{(R_1+n)}; n\leq  R_2)$. We can obtain the measure $\P_x$ by choosing a path according to $Q_0$ and concatenate it with an i.i.d.~sequence sampled from $Q$.
 \end{rem}


 \section{Surface groups}
While the results in the previous sections are valid for random walks with finite first moments, we need some additional assumptions in order to prove a central limit theorem and  analyticity of the rate of escape and of the asymptotic variance. 

We say a real valued random variable $Y$ has exponential moments if $\E[\exp(\lambda Y)]<\infty$ for some $\lambda>0$, or equivalently, if  there exist positive constants $C$ and $c<1$ such that $\P[Y=n]\leq C c^{n} $ for all $n\in \N$.
The random variables appearing  in Theorem \ref{thm:reg} do in general not  have exponential moments. 
However, this is the case under the following assumption.

\begin{assumption}\label{ass:3}
The driving measure $\mu$ has exponential moments, \emph{i.e.}, $\E[\exp(\lambda_{\mu} d(X_{1},e))]<\infty$ 
for some $\lambda_{\mu}>0$.
\end{assumption}

In the sequel of this section we will only consider surface groups with standard generating sets. Lemma \ref{lem:coneshape} assures the existence of an automatic structure $\mathcal{A}$ with a  ubiquitous large cone type $\bT$. The latter allows the construction of the renewal points and times, see Section \ref{sec:ren}, and this construction depends on the choices  of $\mathcal{A}$ and $\bT$. However, in order to facilitate the reading, we formulate the statements without specifying the structure $\mathcal{A}$ nor the type $\bT$. 

\begin{lem}\label{lem:momentbounds} Let $(\Gamma,S)$ be a surface group with standard generating set.  Under Assumption \ref{ass:3} the renewal times $R_{1}$ and  $(R_{i+1}-R_{i})$ for $i\geq 1$ have exponential moments. The same holds true for $d(Z_{R_{1}},e)$ and $d(Z_{R_{i+1}},Z_{R_{i}})$ for $i \geq 1$.
\end{lem}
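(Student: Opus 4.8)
The plan is to show that each of the building blocks of the renewal decomposition grows at most geometrically, and then combine these estimates multiplicatively. First I would record that Assumption \ref{ass:3} gives $\P[d(X_1,e)\ge k]\le Ce^{-\lambda_\mu k}$, so along the way the increment of the walk between consecutive times has exponential tails; in particular $d(Z_n,e)\le n\cdot(\text{something with exponential tails})$, and more usefully, for any stopping time $T$ with $\P[T\ge n]\le Cc^n$, the displacement $d(Z_0,Z_T)$ also has exponential moments (a standard Wald-type / sub-multiplicative argument: condition on $T=m$, use $d(Z_0,Z_T)\le\sum_{i\le m}d(X_i,e)$, and either use independence of $X_i$ from $\{T=m\}$ when $T$ is a stopping time, or absorb the extra factor into a slightly smaller $\lambda$). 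So the displacement statements will follow from the time statements, and I would reduce everything to showing $R_1$ and $R_{i+1}-R_i$ have exponential moments.

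Next I would control $E$ and $D$. For $E$, inequality \eqref{eq:Eexpmom} already gives $\P_y[E\ge Nm]\le(1-c)^N$ uniformly in $y$, i.e. $E$ has exponential moments uniformly over starting points. For $D$, I need the reverse: under $\Q_\bT$, i.e. conditioned on never leaving the cone, $D=\infty$, so $D$ does not even appear; the relevant quantity is the time spent in the ``excursion'' pieces $D_k^{(n)}$ that do leave the cone. The key point here, which is where the surface-group hypothesis enters through Lemma \ref{lem:coneshape} (every cone type is large), is that there is a uniform lower bound $c>0$ on $\P_x[D=\infty]$ (Lemma \ref{lem:stayinconegen}), and I claim also a uniform exponential tail bound $\P_x[n\le D<\infty]\le Cc_0^n$. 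This last bced — that a finite excursion out of a large cone is short with exponentially high probability — I would get from hyperbolicity: leaving the cone $C_\mathcal{A}(x)$ after having gone deep into it forces the walk to backtrack a linear distance along the two bounding geodesic rays of Lemma \ref{lem:coneshape}, and such backtracking has exponentially small probability for a transient walk on a non-amenable group (a deviation/Azuma-type estimate, using Assumption \ref{ass:3} for the exponential moments of the increments). Alternatively one phrases it via the Green function / first-passage decay, but the geometric picture from Lemma \ref{lem:coneshape} is the cleanest.

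Then I would assemble the estimate for $R_1=S^{(1)}_{K^{(1)}}$. Conditionally, $K^{(1)}$ is geometric with success probability $\ge c$ (this is exactly \eqref{eq:lem:Rfinite2}: at each $k$ the chance of going on is $\le1-c$), so $\P[K^{(1)}\ge N]\le(1-c)^N$. And $R_1=\sum_{k=0}^{K^{(1)}-1}(D_k^{(1)}+E_k^{(1)})+E_{K^{(1)}}^{(1)}$, a sum of at most $K^{(1)}+1$ terms, each of which has exponential moments uniformly (the $E$'s by \eqref{eq:Eexpmom}, the $D_k^{(1)}$'s — which are finite on these events — by the uniform bound $\P_x[n\le D<\infty]\le Cc_0^n$ from the previous paragraph). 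A random geometric sum of i.i.d.-ish exponentially-integrable terms is itself exponentially integrable: compute $\E[e^{\lambda R_1}]\le\sum_N \P[K^{(1)}=N]\,(\sup_y\E_y[e^{\lambda(D+E)}\,;\,D<\infty])^{N}\cdot(\text{tail factor})$, which converges for $\lambda$ small enough that $(1-c)\cdot\sup_y\E_y[e^{\lambda(D+E)};D<\infty]<1$; such $\lambda>0$ exists because the sup tends to $\E_y[\mathbf 1_{D<\infty}\cdot 1]\le 1-c'<1$-ish… more precisely it tends to $\sup_y\P_y[D<\infty]<1$ as $\lambda\downarrow0$ by monotone/dominated convergence, so one can choose $\lambda$ making the product $<1$. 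The same computation, now under $\Q_\bT$ but with the identical uniform ingredients, handles $R_2-R_1$ (and inductively $R_{i+1}-R_i$, which are i.i.d.\ by Theorem \ref{thm:reg}), using that $S^{(n)}_0=R_{n-1}+1+E\circ\theta^{R_{n-1}+1}$ contributes only one more exponentially-integrable $E$-term.

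The main obstacle is the uniform exponential tail estimate $\P_x[n\le D<\infty]\le Cc_0^n$ for the length of a \emph{finite} excursion out of a large cone — everything else is a geometric-sum bookkeeping argument built on facts already in the excerpt. Making that estimate rigorous requires quantifying, via hyperbolicity and the two-geodesic-ray description of cones in Lemma \ref{lem:coneshape}, that to exit the cone late the walk must have travelled far inside and then returned a comparable distance, combined with a large-deviation bound for the word-distance of the walk (which is where Assumption \ref{ass:3}, exponential moments of $\mu$, is genuinely used). I would isolate this as a separate lemma before proving Lemma \ref{lem:momentbounds}.
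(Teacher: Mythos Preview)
Your overall architecture matches the paper's proof exactly: reduce the displacement statements to the time statements via Chebyshev and the exponential moments of $\mu$; control $E$ uniformly via \eqref{eq:Eexpmom}; establish a uniform exponential tail $\P_x[n\le D<\infty]\le Cc_0^n$; then assemble $R_1$ as a geometric sum of $(D_k^{(1)}+E_k^{(1)})$-blocks indexed by $K^{(1)}$, choosing $\lambda$ small enough that the geometric factor dominates. The assembly computation you sketch is essentially the one in the paper.

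The one place where your mechanism diverges from the paper, and where your argument is incomplete as written, is the $D$ tail bound. You phrase it as a \emph{backtracking} estimate: if $D$ is large, the walk went deep into the cone and then had to retreat a linear distance, which is exponentially unlikely. But this presupposes the walk went deep. Nothing prevents the walk from lingering in a tube around the two boundary geodesic rays of Lemma~\ref{lem:coneshape} for $n$ steps without ever moving far from $\partial_\Gamma C(x)$; in that scenario there is no backtracking to penalize, and your argument says nothing. The paper's proof avoids this case analysis entirely by arguing pointwise at the exit time rather than historically: on $\{D=n+1\}$ one has $Z_n\in C(x)$ and $Z_{n+1}\notin C(x)$, so $Z_n$ lies in the $d(X_{n+1},e)$-tube around $\partial_\Gamma C(x)$. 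Split according to whether $d(Z_n,x)\ge\delta n$ (killed by Chebyshev and Assumption~\ref{ass:3}) or $d(Z_n,x)\le\delta n$; in the latter case $Z_n$ lies in the intersection of a ball of radius $\delta n$ with an $\eps n$-tube around two geodesic rays, a set of cardinality at most $C\,\delta n\,|S|^{\eps n}$. Now apply Kesten's spectral radius bound $\P_x[Z_n=y]\le C\rho(\mu)^n$ with $\rho(\mu)<1$ and choose $\eps$ small so that $\rho(\mu)|S|^\eps<1$. This is where the planarity (linear growth of $\partial_\Gamma C(x)$) and non-amenability (spectral gap) are each used once, cleanly, with no reference to the trajectory before time $n$.

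Your backtracking picture could be salvaged by adding a separate estimate for the probability of remaining in a tube around a geodesic for $n$ steps, but that estimate would itself be proved by the spectral-radius-plus-counting argument above, so you may as well use it directly.
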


\begin{proof}
Let us first prove that $R_{1}$ has exponential moments. In Equation  (\ref{eq:Eexpmom})   we have established that $E$   has uniform exponential moments: there are some constants $\lambda_E>0$ and $C_E<\infty$  such that for all $x\in\Gamma$ we have $\E_x[\exp(\lambda_E E)]\le C_E$.

In order to control  the moments of $D$ we make use of the non-amenability and the planarity of $\Gamma$.  Due to Lemma \ref{lem:expdecay} there exists some $\varrho<1$ such that for all $x,y\in\Gamma$ and all $n\ge 1$, 
 \begin{equation}\label{eq:rho}
 \P_x[Z_n=y] \le  \varrho^n.
 \end{equation}
 
We proceed with the tails of $\P_{x}[D=n]$ for $x$ such that $T(x)=\bT$. Let $\delta>0$ to be chosen later, then
\begin{equation}\label{eq:lem:momentbound:1}
\P_{x}[D=n+1]\leq \P_{x}[d(Z_{n},x)\leq \delta n,  D=n+1]+ \P_{x}[d(Z_{n},x)\geq \delta n,  D=n+1].\end{equation}
The second summand is controlled by using the Chebyshev inequality:
\begin{eqnarray*}
\P_{x}[d(Z_{n},x) \geq \delta n,  D=n+1] & \leq & \P\left[\sum_{i=1}^{n} d(X_{i},e)\geq \delta n\right]\\
& \leq & \frac{\E[\exp(\lambda_{\mu} \sum_{i=1}^{n} d(X_{i},e))] }{\exp(\lambda_{\mu}\delta n)}\\
& = & \frac{(\E[\exp(\lambda_{\mu}  d(X_{1},e))] )^{n}}{\exp(\lambda_{\mu}\delta n)}.\end{eqnarray*}
Since $\mu$ has exponential moments we can choose $\delta$ sufficiently large such that the latter term converges exponentially fast to $0$.

In order to treat the first summand of Equation (\ref{eq:lem:momentbound:1}) we make use of Lemma \ref{lem:coneshape}.
Let $\gamma$ be a geodesic. We define the $m$-tube of $\gamma$ as $\gamma^{(m)}:=\bigcup_{x\in \gamma} B(x,m)$. 
Let $\gamma_{\ell}, \gamma_{r}$ be the two geodesics such that $\partial_{\Gamma} C(x) = \gamma_{\ell}\cup \gamma_{r}$, 
then we define the $m$-tube of $\partial_{\Gamma} C(x)$ as $\partial^{(m)} C:= \gamma_{\ell}^{(m)}\cup \gamma_{r}^{(m)}$. 
 Now we obtain, using mainly Equation (\ref{eq:rho}) and the linear growth of the boundary of cones, that for all $\eps>0$:
\begin{eqnarray*}
\P_{x}\left[d(Z_{n},x) \leq \delta n, D=n+1\right] 
  &\leq &  \P_{x}[ d(Z_{n},x) \leq \delta n,  Z_{n}\in \partial^{(\eps n)} C(x)] \cr 
     & &+  \P_{x}[ D=n+1, Z_{n}\notin \partial^{(\eps n)} C(x)] \cr
  &\leq &  \varrho^{n} |\partial^{(\eps n)} C(x)\cap B(x,\delta n)| + \P[d(Z_{n+1},Z_{n})>\eps n]\cr
  &\leq&  \varrho^{n} (2\delta n) |S|^{\eps n}  + \P[d(X_{1},e)>\eps n].
 \end{eqnarray*}
Choose eventually $\eps>0$ sufficiently small so that $\varrho |S|^{\eps}<1$. Since for $\eps$ fixed the probability $\P[d(X_{1},e)>\eps n]$ decays exponentially in $n$,
there are some constants $\lambda_D>0$ and $C_D<\infty$ such that 
\begin{equation}\label{eq:PzD}
\E_{x}[\exp(\lambda_D D) \1_{\{D<\infty\}}] \le C_D\quad \forall x: T(x)=\bT.
\end{equation}

Now, recall that 
\begin{equation}\label{eq:R1}
R_{1}= E+\sum_{k=1}^{K^{(1)}} (D_{k}^{(1)}+E_{k}^{(1)}),\end{equation}
where $K^{(1)}$ is the smallest time $k$ such that $D^{(i)}_{k+1}=\infty$.

Recall that $K^{(1)}$ has exponential moments, see (\ref{eq:lem:Rfinite2}), so that there exist constants
$\lambda_K>0$ and $C_K<\infty$ such that $\P[K^{(1)}=k] \le C_K \exp(-\lambda_{K}k)$.
We can decompose
\begin{equation*}
\P_x[n\leq D+E\circ\theta^D<\infty]  \leq  \P_{x}[n/2\leq D <\infty] + \sum_{k=1}^{n/2}\P_x[ E\circ\theta^k \geq n/2] .\end{equation*}
Hence, we may find constants $C>0$, $\lambda>0$ such that
$$\E_x[\exp(\lambda(D+ E\circ\theta^D))\1_{\{D <\infty\}}] \le C \quad \forall x: T(x)=\bT.$$
Therefore, we may choose $\lambda_1$ small enough such that
\[ \E_{x}[\exp(\lambda_1(D+ E\circ\theta^D)) | D<\infty] \le \exp(\lambda_{K} /2)\quad \forall x: T(x)=\bT.\]
Eventually, using the strong Markov property, 
\begin{eqnarray*}
\E[\exp(\lambda_{1} R_{1})] & = &\sum_{k=1}^{\infty} 
  \E\left[\exp\left(\lambda_{1}\left(E+\sum_{i=1}^{k} (D_{i}^{(1)}+E_{i}^{(1)})\right)\right)\mid K^{(1)}=k\right] \P[K^{(1)}=k]\cr
  & \leq & C_K\sum_{k=1}^{\infty} \E[\exp{\lambda_{1} E}] (\exp(\lambda_{K}/2))^{k}\exp(-\lambda_K k)\cr
  &\leq& C \sum_{k=1}^{\infty} \exp(-\lambda_{K}k/2) <\infty.
\end{eqnarray*}

The proof for $R_{i+1}-R_{i},$ $i\geq 1$, is analogous since  the laws of the different
$D_k^{(i+1)}$ are independent of $R_i$.
Moreover, Equation (\ref{eq:Eexpmom}) implies exponential
moments for $E\circ \theta^{R_{i}}$ and $E_k^{(i+1)}$ as well. 
\bigskip

We turn to the exponential moments of the distances between two successive renewal points. Let $\delta>0$ to be chosen later. 
Then, since $R_{1}$ has exponential moments,
\begin{eqnarray*}
\P[ d(Z_{R_{1}},e)\geq k] &\leq & \P[d(Z_{R_{1}},e)\geq k, R_{1}\geq k\delta] + \P[d(Z_{R_{1}},e)\geq k, R_{1}\leq k\delta]\cr
&\leq & C e^{-c k  \delta} + \P\left[\sum_{i=1}^{k\delta} d(X_{i},e)\geq k\right]
\end{eqnarray*}
and hence,  using again Chebyshev's inequality,  we see that for suitable $\delta$ 
the last term decays exponentially fast. The proof for $d(Z_{R_{i}}, Z_{R_{i+1}})$, $i\geq 1$, is in the same spirit: 
\begin{eqnarray}
\P[ d(Z_{R_{i+1}}, Z_{R_{i}})\geq k] &\leq & \P[ R_{i+1}-R_{i}\geq k\delta]
 + \P[d(Z_{R_{i+1}}, Z_{R_{i}})\geq k, R_{i+1}-R_{i}\leq k\delta]\cr
&\leq & C e^{-ck\delta} +  \P[d(Z_{R_{i+1}}, Z_{R_{i}})\geq k, R_{i+1}-R_{i}\leq k\delta].
\end{eqnarray}
Using Lemma \ref{lem:markov} the last summand becomes
$$ \P[d(Z_{R_{i+1}}, Z_{R_{i}})\geq k, R_{i+1}-R_{i}\leq k\delta] = \Q_{\bT}[d(Z_{R_1},Z_{0})\geq k, R_{1}\leq k\delta]$$
Once again, an application of the exponential Chebyshev inequality yields that 
$\delta$ can be chosen such that 
$\Q_{\bT}\left[\sum_{i=1}^{k\delta} d(X_{i},e)\geq  k\right]$ decays exponentially fast and the claim follows as above. 
\end{proof}
\begin{rem}\label{rem:inftyends}
In the case of hyperbolic groups with infinitely many ends, it follows from Stalling's splitting theorem that the boundaries
of the cones are finite if we choose the generators accordingly. 
Hence the proof of Lemma \ref{lem:momentbounds} applies to this setting.
\end{rem}

\begin{cor}\label{cor:overshoot}
Set
$$M_{k}=\sup\{d(Z_{n},Z_{R_{k}}), R_{k}\le n \leq R_{k+1}\}, ~k\geq 1,$$
and $$k(n)=\sup\{k:~R_{k}\le n\}\,.$$ 
Under the assumptions of Lemma \ref{lem:momentbounds}, 
$(M_{k})_{k\geq 1}$ is an i.i.d.~sequence with exponential moments and
$$ \frac{n}{k(n)} \xrightarrow[n \to \infty]{a.s.}  \E[R_{2}-R_{1}]<\infty.$$
\end{cor}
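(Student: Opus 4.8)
The statement splits into two essentially independent parts, and both follow from the renewal structure together with Lemma \ref{lem:momentbounds}. For the first part, the key observation is that $M_k$ is a measurable function of the $k$-th \emph{excursion} $\mathcal{E}_k:=(Z_{R_k}^{-1}Z_{R_k+j})_{0\le j\le R_{k+1}-R_k}$: by left-invariance of $d$ one has $M_k=\sup\{d(e,Z_{R_k}^{-1}Z_n):R_k\le n\le R_{k+1}\}$, and, just as for the lengths $d(Z_{R_{i-1}},Z_{R_i})$ in the proof of Theorem \ref{thm:reg}, the increment $R_{k+1}-R_k$ is a measurable functional of the shifted trajectory $(Z_{R_k}^{-1}Z_{R_k+j})_{j\in\N}$. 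Hence, by Lemma \ref{lem:markov}, conditionally on $\GG_k$ the variable $M_k$ has the law of $M_0:=\sup\{d(Z_n,Z_0):0\le n\le R_1\}$ under $\Q_{\bT}$; moreover $M_k$ is a function of the post-$R_k$ trajectory, hence independent of $\GG_k$, which contains $\sigma(M_1,\dots,M_{k-1})$. Therefore $(M_k)_{k\ge1}$ is i.i.d.

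For the exponential moments of $M_k$ I would follow verbatim the scheme used in Lemma \ref{lem:momentbounds} for $d(Z_{R_i},Z_{R_{i+1}})$. Since $Z_{R_k}^{-1}Z_n=X_{R_k+1}\cdots X_n$ for $R_k\le n\le R_{k+1}$, the triangle inequality gives the crude bound $M_k\le\sum_{j=R_k+1}^{R_{k+1}}d(X_j,e)$. Fix $\delta>0$ to be chosen and split
\[
\P[M_k\ge t]\le \P[R_{k+1}-R_k\ge\delta t]+\P[M_k\ge t,\ R_{k+1}-R_k\le\delta t].
\]
The first term is exponentially small in $t$ by Lemma \ref{lem:momentbounds}. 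For the second, Lemma \ref{lem:markov} rewrites it as $\Q_{\bT}[M_0\ge t,\ R_1\le\delta t]$, which is bounded by $\Q_{\bT}[\sum_{j=1}^{\lceil\delta t\rceil}d(X_j,e)\ge t]$. Since $\Q_{\bT}$ is the law of the walk started at a vertex of type $\bT$, relative to that vertex, conditioned on the event $\{D=\infty\}$, whose probability is at least $c>0$ by Lemma \ref{lem:stayinconegen}, and since $\{D=\infty\}$ restricted to this displacement constraint is a superset, this is at most $c^{-1}\P[\sum_{j=1}^{\lceil\delta t\rceil}d(X_j,e)\ge t]$ with the $X_j$ now i.i.d.\ of law $\mu$. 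Assumption \ref{ass:3} and the exponential Chebyshev inequality then make this term exponentially small in $t$, provided $\delta$ is chosen small enough; this proves that $M_k$ has exponential moments.

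The second part is the elementary renewal theorem. By Theorem \ref{thm:reg} the increments $(R_{i+1}-R_i)_{i\ge1}$ are i.i.d., and by Lemma \ref{lem:momentbounds} they have a finite mean $\E[R_2-R_1]$, which is finite and, being at least $1$, strictly positive. Writing $R_k=R_1+\sum_{i=2}^{k}(R_i-R_{i-1})$ and using $R_1<\infty$ a.s.\ (Lemma \ref{lem:markov}), the strong law of large numbers gives $R_k/k\to\E[R_2-R_1]$ a.s. Since all $R_k$ are a.s.\ finite and $R_k\to\infty$, the index $k(n)$ is a.s.\ finite and nondecreasing with $k(n)\to\infty$. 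From $R_{k(n)}\le n<R_{k(n)+1}$ we get
\[
\frac{R_{k(n)}}{k(n)}\ \le\ \frac{n}{k(n)}\ <\ \frac{R_{k(n)+1}}{k(n)+1}\cdot\frac{k(n)+1}{k(n)},
\]
and letting $n\to\infty$ both bounds converge to $\E[R_2-R_1]$, which yields $n/k(n)\to\E[R_2-R_1]<\infty$ a.s.

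The only genuinely delicate point is the bookkeeping of the dependence structure: the steps of the walk performed strictly between two renewal times are governed by $\Q_{\bT}$ rather than by $\mu$, so $M_k$ is not literally a geometric sum of i.i.d.\ $\mu$-increments. This is precisely why one first conditions via Lemma \ref{lem:markov} to pass to $\Q_{\bT}$, and then uses the comparison $\Q_{\bT}[A]\le c^{-1}\P[A]$ for events $A$ depending on finitely many coordinates, before applying the exponential Chebyshev estimate; everything else is routine.
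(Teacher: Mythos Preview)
Your proof is correct and follows essentially the same approach as the paper: the i.i.d.\ property via Lemma~\ref{lem:markov}, the exponential moments via the same splitting $\P[M_k\ge m]\le \P[R_{k+1}-R_k\ge\delta m]+\Q_{\bT}[\sum_{i\le\delta m} d(e,X_i)\ge m]$ plus exponential Chebyshev, and the renewal-theorem sandwich for $n/k(n)$. Your treatment is in fact slightly more explicit than the paper's in justifying the passage from $\Q_{\bT}$ back to $\P$ via the bound $\Q_{\bT}[A]\le c^{-1}\P[A]$, and your sandwich $R_{k(n)}/k(n)\le n/k(n)<\bigl(R_{k(n)+1}/(k(n)+1)\bigr)\cdot(k(n)+1)/k(n)$ is a marginally cleaner variant of the paper's factorisation $n/k(n)=(n/R_{k(n)})(R_{k(n)}/k(n))$.
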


\begin{proof} 
The proof that $(M_k)$ are i.i.d.~follows from Lemma \ref{lem:markov}, as in the proof of  Theorem  \ref{thm:reg}. The fact that $M_k$ have exponential moments can either be seen as in Lemma \ref{lem:momentbounds} or as follows. Let $\delta>0$ (to be chosen later). Then, by the law of total probability, for $m\in\N$
\begin{eqnarray*}
 \P[M_{k}\geq m] &\leq& \P[R_{k+1}-R_{k}\geq m\delta ] + \P[\sup\{ d(Z_{R_{k}},Z_{n}), R_{k}\leq  n\leq  R_{k}+m \delta\}\geq m]\cr
 & \leq & \P[R_{k+1}-R_{k}\geq m \delta] + \Q_{T}\left[ \sum_{i=1}^{m \delta} d(e,X_{i})\geq m\right].
\end{eqnarray*}
Since $R_{k+1}-R_{k}$ and $\mu$ have exponential moments, applications of Chebyshev's inequality for both summands show that  we can choose $\delta$  sufficiently small such that $\P[M_{k}\geq m]$ decays exponentially fast to $0$.
Concerning $k(n)$, we write 
$$\frac{n}{k(n)}= \frac{n}{R_{k(n)}}\frac{R_{k(n)}}{k(n)}\,.$$
The second factor tends a.s.~to $\E[R_2-R_1]$ by the strong law of large numbers (since $k(n)$ tends to infinity).
For the first factor we observe  that $R_{k(n)}\le n \le R_{k(n)+1}$,
hence $$\limsup_{n\to\infty} \frac{R_{k(n)}}{n}\le 1\,.$$
On the other hand, since $n\ge k(n)$ and  $(R_{k(n)}-R_{k(n)+1})$ have finite moments,
$$\lim_{n\to\infty} \frac{R_{k(n)}-R_{k(n)+1}}{n}=0\quad \hbox{a.s.}$$ and hence
$$\liminf_{n\to\infty}\frac{R_{k(n)}}{n}\ge \liminf_{n\to\infty}\left( \frac{R_{k(n)}-R_{k(n)+1}}{n}\right)+ \frac{R_{k(n)+1}}{n}\ge 1\,.$$
\end{proof}

\subsection{Limit Theorems}
The existence of the law of large numbers (LLN) is a direct consequence of  Kingman's subadditive ergodic theorem. Moreover, it was proven by Guivarc'h \cite{Gui:80} that for non-amenable Cayley graphs the speed is positive.
We give a formula for the speed in terms of the renewal structure and recover the above results without using Kingman's theorem for driving measures with exponential moments.

\begin{thm}\label{thm:LLN_planar}
Let $(\Gamma,S)$ be a surface group with standard  generating set  and  assume the driving measure $\mu$ to have exponential moments. Then,
\begin{equation}\label{eq:v} 
\frac1n d(Z_{n},e)  \xrightarrow[n \to \infty]{a.s.} v=  \frac{\E[ d(Z_{ R_{2}}, Z_{ R_{1}})]}{\E[  R_{2}- R_{1}]}>0.
\end{equation}
\end{thm}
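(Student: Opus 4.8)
The plan is to derive the law of large numbers directly from the renewal decomposition established in Theorem~\ref{thm:reg} together with the moment estimates of Lemma~\ref{lem:momentbounds} and Corollary~\ref{cor:overshoot}. First I would write, for $n\geq R_1$, the sandwiching
\[
d(Z_{R_{k(n)}},e)-M_{k(n)} \;\leq\; d(Z_n,e)\;\leq\; d(Z_{R_{k(n)}},e)+M_{k(n)},
\]
where $k(n)=\sup\{k:\,R_k\leq n\}$ and $M_k$ is the overshoot defined in Corollary~\ref{cor:overshoot}; this uses the triangle inequality and $R_{k(n)}\leq n\leq R_{k(n)+1}$. By Theorem~\ref{thm:reg} we have $d(Z_{R_{k(n)}},e)=d(Z_{R_1},e)+\sum_{i=2}^{k(n)} d(Z_{R_i},Z_{R_{i-1}})$, a sum of i.i.d.\ terms (for $i\geq 2$) with exponential, hence finite, first moments by Lemma~\ref{lem:momentbounds}.

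Next I would handle the three contributions. For the main term, dividing by $k(n)$ and applying the strong law of large numbers to the i.i.d.\ sequence $(d(Z_{R_i},Z_{R_{i-1}}))_{i\geq 2}$ gives $\frac{1}{k(n)}d(Z_{R_{k(n)}},e)\to \E[d(Z_{R_2},Z_{R_1})]$ almost surely, since $k(n)\to\infty$ a.s.\ (the $R_k$ are a.s.\ finite and strictly increasing). For the error term, $M_{k(n)}/k(n)\to 0$ a.s.: indeed $M_k$ are i.i.d.\ with exponential moments by Corollary~\ref{cor:overshoot}, so $M_k/k\to 0$ a.s.\ by the Borel--Cantelli lemma (the tails $\P[M_k\geq \eps k]$ are summable), and one passes to the subsequence $k=k(n)$. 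Finally, Corollary~\ref{cor:overshoot} gives $n/k(n)\to \E[R_2-R_1]<\infty$ a.s. Combining,
\[
\frac1n d(Z_n,e)=\frac{k(n)}{n}\cdot\frac{d(Z_n,e)}{k(n)}\;\xrightarrow{a.s.}\;\frac{\E[d(Z_{R_2},Z_{R_1})]}{\E[R_2-R_1]}=:v.
\]

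For positivity of $v$, I would invoke either Guivarc'h's theorem (cited in the text) that non-amenable groups have positive speed, or argue directly: $\E[R_2-R_1]<\infty$ and $\E[d(Z_{R_2},Z_{R_1})]>0$ because $d(Z_{R_2},Z_{R_1})\geq 1$ on the positive-probability event $R_2>R_1$ (the renewal points are distinct as $R_2$ is the first time strictly after $R_1$ with the cone-stabilization property, and distinct vertices on a geodesic are at distance $\geq 1$); in fact $R_2-R_1\geq 1$ always so $d(Z_{R_2},Z_{R_1})\geq 1$ whenever the walk actually moves, which it does with positive probability under Assumption~\ref{ass:2}. The only genuinely delicate point is making sure the error term $M_{k(n)}/k(n)$ is controlled along the random index $k(n)$ rather than a deterministic one; this is routine once one notes $k(n)\to\infty$ and applies the a.s.\ convergence $M_k/k\to 0$ valid for the full sequence. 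No step here is a serious obstacle, since all the structural work (renewal decomposition, i.i.d.\ property, exponential moments) has already been done in the preceding results; the proof is essentially an assembly of those ingredients via a renewal-reward argument.
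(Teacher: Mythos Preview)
Your proposal is correct and follows essentially the same route as the paper's proof: both decompose $d(Z_n,e)/n$ via the renewal index $k(n)$, apply the strong law of large numbers to the i.i.d.\ increments $d(Z_{R_i},Z_{R_{i-1}})$, invoke Corollary~\ref{cor:overshoot} for $n/k(n)\to\E[R_2-R_1]$, and kill the overshoot using $M_k/k\to 0$. Your sandwich inequality is just a slightly more explicit version of the paper's one-line estimate $\big(d(Z_n,e)-d(Z_{R_{k(n)}},e)\big)/n\le M_{k(n)}/k(n)$, and your positivity argument is a more detailed unpacking of the paper's terse ``follows from $\E[R_2-R_1]<\infty$''.
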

\begin{proof}

The law of large numbers for i.i.d.~sequences applied to $(R_{k+1}-R_k)_k$ and  $(d(Z_{R_k}, Z_{R_{k+1}}))_k$, tells us that
\begin{equation*} \frac{R_{k}}k  \xrightarrow[k \to \infty]{a.s.} \E[R_{2}-R_{1}] \mbox{ and } 
\frac{d(Z_{R_{k}},e)}{k}  \xrightarrow[k \to \infty]{a.s.} \E [d(Z_{R_{2}},Z_{R_{1}})].
\end{equation*}
With $k(n)=\max\{k:~R_{k}\leq n\}$ we have $k(n)/n \to 1/\E[R_{2}-R_{1}]$ a.s.~by Corollary \ref{cor:overshoot}. 
Moreover, the latter also implies that
$$\lim_{n\to\infty}\frac{d(Z_{n},e)-d(Z_{R_{k(n)}},e)}n  \le \lim_{k\to\infty}\frac{M_k}k=0 \quad\hbox{a.s.}\,.$$
Hence
\begin{eqnarray*}
\frac{d(Z_{n},e)}n& =& \frac{d(Z_{n},e)-d(Z_{R_{k(n)}},e)}n + \frac{d(Z_{R_{k(n)}},e)}{k(n)} \frac{k(n)}{n}\cr
&   \xrightarrow[n \to \infty]{a.s.} & 0  + \frac{\E[ d(Z_{ R_{2}}, Z_{ R_{1}})]}{\E[ R_{2}- R_{1}]}.
\end{eqnarray*} The strict positiveness of $v$ follows from the fact that $\E[ R_{2}- R_{1}]<\infty.$
\end{proof}

\subsection{Proof of Theorem \ref{thm:CLT_planar}}
Consider the following sequence of real valued random variables: 
$$\xi_{i}=d(Z_{R_{i+1}}, Z_{R_{i}}) - (R_{i+1}- R_{i})v\quad i\geq 1.$$ 
According to Theorem \ref{thm:reg}  this is a sequence of centered  i.i.d.~random variables. 
Moreover, $\Sigma=\E[\xi_{1}^2]>0$ since $\P( |\xi_{i}|>k)>0$ for some $k\ge 0$.
 Let
$$ S_{n}=\sum_{i=1}^n \xi_{i}\mbox{, and } \Sigma=\E[\xi_{1}^2],$$  
The sequence $(S_{n})_n$ does not only satisfy a central limit theorem, \emph{i.e.}, 
$S_{n}/\sqrt{n}\stackrel{\D}{\longrightarrow} \normal(0,  \Sigma)$, but also an invariance principle, \emph{i.e.}, 
$\frac1{\Sigma\sqrt{n}} S_{\lfloor nt\rfloor}$ converges in distribution to a standard Brownian motion (\emph{e.g.,} see Donsker's Theorem 14.1
 in \cite{billingsley}).
Let $$k(n)=\max\{k:~\sum_{i=1}^k (R_{i}-R_{i-1})<n\}$$ as in Corollary \ref{cor:overshoot}.

As the invariance principle is preserved under change of time (\emph{e.g.,} see Theorem 14.4 in \cite{billingsley}) the sequence
$\frac1{\Sigma\sqrt{k(n)}} S_{\lfloor k(n)t\rfloor}$ also converges in distribution to a standard Brownian motion. Choosing $t=1$ yields in particular
$$S_{k(n)}/\Sigma\sqrt{k(n)} \xrightarrow[n \to \infty]{\D}\normal(0,  1)\,.$$
From Corollary \ref{cor:overshoot}, since $(n/k(n))$ tends to a constant almost surely, we get
that $$S_{k(n)}/\Sigma\sqrt{n}\xrightarrow[n \to \infty]{\D}
\normal(0,  \sigma^2),$$ where $$\sigma^2= 1 / \E[R_{2}-R_{1}].$$ 
Corollary \ref{cor:overshoot} also ensures that the random variables
$$M_{k}=\sup\{d(Z_{n},Z_{R_{k}}), R_{k}\le n \leq R_{k+1}\}, ~k\geq 1,$$
form an i.i.d.~sequence with exponential moments.
Now, for any positive $\eta$
\begin{eqnarray}\label{eq:thm:clt:final1} 
\P\left[| S_{k(n)}- (d(Z_{n},e)-nv)| > \eta \sqrt{n}\right] 
   & \leq & \P\left[ d(Z_{R_{k(n)+1}},e)-d(Z_{n},e) \geq \frac\eta{2} \sqrt{n}\right] \cr 
      & & + \P\left [ v(n-R_{k(n)+1}) \geq \frac\eta{2} \sqrt{n}\right].
\end{eqnarray}
We start by  treating the first summand in (\ref{eq:thm:clt:final1}). Let $M_{0}=\sup\{d(Z_{R_{1}},Z_{0}),  n \leq R_{1}\}$, then
\begin{eqnarray*}
\P\left[ d(Z_{R_{k(n)+1}},e)-d(Z_{n},e) \geq \frac\eta{2} \sqrt{n}\right] 
 & \leq & \P\left[\exists   k\le n+1:~M_{k} > \frac\eta{2} \sqrt{n}\right]  \cr
 & \leq & \P\left[\exists 1\leq  k\le n+1:~M_{k} > \frac\eta{2} \sqrt{n}\right] \cr 
 && + \P\left[M_{0} > \frac\eta{2} \sqrt{n}\right] \cr
 & \leq & n \P\left[ M_{1}>\frac\eta{2} \sqrt{n}\right] + \P\left[M_{0} > \frac\eta{2} \sqrt{n}\right]\cr
 & \xrightarrow[n\to\infty]{} & 0+0.
\end{eqnarray*} 
Here we used for the second summand  that $M_{0}$ is  almost surely finite and for the first summand we applied  
once again the existence of exponential moments and the Chebyshev inequality:  
$$n\P\left[ M_{1}> \frac{\eta\sqrt{n}}{2}\right] \leq C n \frac{\E[\exp(\delta M_{1})] }{\exp(\delta \sqrt{n})} \xrightarrow[n\to\infty]{}  0,$$ for some $\delta>0$.
The treatment of the second summand in (\ref{eq:thm:clt:final1}) is analogous by noting that
$$ \P\left[ v(n-R_{k(n)+1}) \geq \frac\eta{2} \sqrt{n}\right] \leq 
  \P\left[ \exists k\leq n:~ R_{k+1}-R_{k} \geq \frac\eta{2v} \sqrt{n}\right].$$
Altogether, the term in (\ref{eq:thm:clt:final1}) tends to $0$ for all $\eta>0$; this  finishes the proof  of  Theorem \ref{thm:CLT_planar}.

\subsection{Analyticity of $v$ and $\sigma^{2}$}
The aim of this section is to prove (real) analyticity of $v_{\mu}$ and $\sigma_{\mu}$. As any real function that is obtained as the restriction of a complex analytic function to a real neighborhood is a real analytic function we will in fact prove that $v_{\mu}$ and $\sigma_{\mu}$ extend to complex analytic functions. 
In this way we can use Generalized Vitali's Theorem that states that the uniform limit of a sequence of complex analytic functions on a fixed complex neighborhood is complex analytic.

Let   $\nu$ be a driving measure of a random walk with exponential moments. 
Furthermore, let $B$ be a finite subset of the support of $\nu$, \emph{i.e.,}  $B \subseteq supp(\nu)$.
Denote by $\Omega_{\nu} (B)$ the set of probability measures that give positive weight to all elements of $B$ and coincide with $\nu$ outside $B$. The set  $\Omega_{\nu} (B)$ can be identified with an open bounded convex subset in $\R^{|B|-1}$. We say, by abuse of notation, that $\O_{\mu}\subset \Omega_{\nu} (B) $ is an open neighborhood of $\mu\in \Omega_{\nu} (B)$ if its restriction to $B$ is an open neighborhood of the restriction of $\mu$ to $B$. We note $\O_{\mu}^{\C}$ for complex open neighborhoods of $\mu$ that may become smaller from line to line.

For each $\mu\in \Omega_{\nu}(B)$ we define the functions $v_{\mu}$ and $\sigma_{\mu}$ as the rate of escape and the asymptotic variance for the random walk with driving measure $\mu$, compare with Theorem \ref{thm:CLT_planar}. We write $\P^{\mu}$ (resp.~$\E^{\mu}$) for the  probability measure (resp.~expectation)  of the random walk corresponding to $\mu$  and $\E_{\bT}^{\mu}[\cdot]=\E^{\mu}_{x}[\cdot | D=\infty]$ with $T(x)=\bT$. 
 
\subsubsection*{Preparations} 
In order to show analyticity of $v_{\mu}$ and $\sigma_{\mu}$ we make the following preparations. Define  $D_{x}=\inf\{n\geq 1\,;\,Z_{n}\notin C_{\mathcal{A}}(x)\}$ and for $z\in C_{\mathcal{A}}(x)$ let $h^{\mu}(z)=h^{\mu}_{x}(z)=\P_z^{\mu}[D_{x}=\infty].$

In this section we consider surface groups with standard generating sets and assume $\nu$ to verify  Assumption \ref{ass:2}. Under this assumptions one verifies, as in the proof of Lemma \ref{lem:stayinconegen}, that  $h^\mu(z)>0$ for all $z\in C_{\A}(x)$.
\begin{lem}\label{lem:claim} Let $n\geq 1$, $x$ of type $\bT$ and $z_{0}, z_{1},\ldots, z_{n}\in C_{\A}(x)$. Then,
$$ \E^{\mu}_{z_{0}}[\1_{Z_{1}=z_{1}}\cdots \1_{Z_{n}=z_{n}} \mid D_{x}=\infty]=
 \E^{\mu}_{z_{0}}\left[\1_{Z_{1}=z_{1}} \cdots \1_{Z_{n}=z_{n}}\prod_{i=1}^{n} \frac{h^{\mu}(Z_{i})}{h^{\mu}(Z_{i-1})}\right].$$ 
\end{lem}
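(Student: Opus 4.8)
The plan is to prove this by a Doob $h$-transform / conditioning argument, exploiting the fact that $D_x=\infty$ is exactly the event that the walk stays in the cone $C_{\mathcal A}(x)$ forever, and that staying in the cone is a tail-type event whose conditional law is governed by the harmonic function $h^\mu$. First I would write out the left-hand side as a ratio: for a cylinder event $A_n=\{Z_1=z_1,\dots,Z_n=z_n\}$ with all $z_i\in C_{\mathcal A}(x)$,
\[
\E^{\mu}_{z_0}[\1_{A_n}\mid D_x=\infty]=\frac{\P^{\mu}_{z_0}[A_n,\ D_x=\infty]}{\P^{\mu}_{z_0}[D_x=\infty]}=\frac{\P^{\mu}_{z_0}[A_n,\ D_x=\infty]}{h^{\mu}(z_0)}.
\]
On the event $A_n$ the walk has already stayed in the cone up to time $n$ (each $z_i\in C_{\mathcal A}(x)$), so the numerator decomposes via the Markov property at time $n$: conditionally on $Z_n=z_n$, the event that the walk never leaves $C_{\mathcal A}(x)$ thereafter has probability $\P^{\mu}_{z_n}[D_x=\infty]=h^{\mu}(z_n)$, and one must also not have left the cone during steps $1,\dots,n$ — but that is automatic from $A_n$. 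Hence $\P^{\mu}_{z_0}[A_n,\ D_x=\infty]=\P^{\mu}_{z_0}[A_n]\,h^{\mu}(z_n)$.

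Next I would rewrite $\P^{\mu}_{z_0}[A_n]\,h^{\mu}(z_n)/h^{\mu}(z_0)$ as the telescoping product claimed. Since on the event $A_n$ we have $Z_i=z_i$, the quantity $\prod_{i=1}^n h^{\mu}(Z_i)/h^{\mu}(Z_{i-1})$ is deterministically equal to $\prod_{i=1}^n h^{\mu}(z_i)/h^{\mu}(z_{i-1})=h^{\mu}(z_n)/h^{\mu}(z_0)$ (here $Z_0=z_0$ under $\P^\mu_{z_0}$, which is why the telescoping closes correctly). Therefore
\[
\E^{\mu}_{z_0}\!\left[\1_{A_n}\prod_{i=1}^{n}\frac{h^{\mu}(Z_i)}{h^{\mu}(Z_{i-1})}\right]=\frac{h^{\mu}(z_n)}{h^{\mu}(z_0)}\,\P^{\mu}_{z_0}[A_n]=\frac{\P^{\mu}_{z_0}[A_n,\ D_x=\infty]}{h^{\mu}(z_0)}=\E^{\mu}_{z_0}[\1_{A_n}\mid D_x=\infty],
\]
which is the asserted identity. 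Two small points need care along the way: one must check $h^{\mu}(z_0)>0$ (and $h^\mu(z_i)>0$ for all $i$) so that the conditioning and the ratios make sense — this is exactly the remark just before the lemma, that $h^\mu>0$ on $C_{\mathcal A}(x)$, obtained as in Lemma \ref{lem:stayinconegen}; and one must justify the factorization $\P^{\mu}_{z_0}[A_n,\ D_x=\infty]=\P^{\mu}_{z_0}[A_n]\,h^{\mu}(z_n)$, which uses the ordinary Markov property at the fixed (non-random) time $n$ together with the observation that $\{D_x=\infty\}=\{Z_k\in C_{\mathcal A}(x)\ \forall k\}$ splits, on $A_n$, into the already-satisfied constraint for $k\le n$ and the shifted event $\{Z_{n+k}\in C_{\mathcal A}(x)\ \forall k\ge 0\}$ which has probability $h^\mu(z_n)$ started from $z_n$.

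The main obstacle is conceptual rather than computational: being careful that $\{D_x=\infty\}$ really is a shift-compatible event, i.e.\ that leaving the cone in the future depends only on the future increments, so that the Markov property applies cleanly; since $C_{\mathcal A}(x)$ is a fixed subset of $\Gamma$ and $D_x$ is defined purely in terms of the positions $Z_n$, this is immediate, but it is the hinge of the argument. Everything else — the telescoping of the product, the positivity of $h^\mu$ — is routine. One could alternatively phrase the whole thing as the statement that under $\P^\mu_{z_0}[\,\cdot\mid D_x=\infty]$ the process $(Z_n)$ is the Doob $h^\mu$-transform of the walk killed on exiting $C_{\mathcal A}(x)$, whose $n$-step transition kernel is precisely $p^{\mu}(u,w)h^{\mu}(w)/h^{\mu}(u)$ for $u,w\in C_{\mathcal A}(x)$; the lemma is then just the $n$-fold product of these transition probabilities, but I would keep the elementary two-line computation above since it is self-contained.
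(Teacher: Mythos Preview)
Your proof is correct and follows exactly the same approach as the paper: write the conditional probability as a ratio, use the Markov property at the deterministic time $n$ to factor $\P^{\mu}_{z_0}[A_n,\,D_x=\infty]=\P^{\mu}_{z_0}[A_n]\,h^{\mu}(z_n)$, and then telescope. The paper simply writes out the case $n=1$ and leaves the rest to the reader, whereas you have carried out the general case explicitly; no substantive difference.
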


\begin{proof}
The proof of the  claim is a straightforward application of the Markov property; we just write it for $n=1$:
\begin{eqnarray*}
 \E^{\mu}_{z_{0}}[\1_{Z_{1}=z_{1}}\mid D_{x}=\infty] 
   &= & \frac{\P^{\mu}_{z_{0}}[Z_{1}=z_{1}, D_{x}=\infty]}{\P^{\mu}_{z_{0}}[D_{x}=\infty]}\cr
   & = &\P^{\mu}_{z_{0}}[Z_{1} = z_{1}]\frac{h^{\mu}(z_{1})}{h^{\mu}(z_{0})}
   =\E^{\mu}_{z_{0}}\left[\1_{Z_{1}=z_{1}} \frac{h^{\mu}(Z_{1})}{h^{\mu}(Z_{0})}\right].
\end{eqnarray*}
\end{proof}

\begin{lem}\label{lem:hanalytic} 
There exists some complex neighborhood $\O^{\C}_{\nu}(B)$ of $\Omega_{\nu} (B)$
such that for all $x\in \Gamma$ and all $z\in C_{\A}(x)$
the function $\mu\mapsto h^{\mu}(z)$  can be extended to a complex analytic function on $\O^{\C}_{\nu}(B)$. 
\end{lem}

\begin{proof}
 For each $ \mu\in \Omega_{\nu} (B)$ we have
$$1-h^{\mu}(z)=\P_{z}^{\mu}[D_{x}<\infty]=\sum_{k} \P_{z}^{\mu}[D_{x}=k].$$
Define $\gamma_{h}^{(k)}$ as the set of all paths $(z_{0},\ldots, z_{k})$ of length $k$ starting at $z$ and leaving 
$ C_{\A}(x)$ for the first time at time $k$, \emph{i.e.}, $z_{0}=z$, $z_{i}\in C_{\A}(x)~\forall 1\leq i \leq k-1,$ 
and $z_{k}\notin  C_{\A}(x)$. We write $x_{1},\ldots,x_{k}$ for the increments of a path in $\gamma^{(k)}_{h}$. The crucial observation now is that
$$\P_{z}^{\mu}[D_{x}=k]=
\sum_{(x_{1},\ldots x_{k})\in\gamma^{(k)}_{h}}\prod_{i=1}^{k} \mu (x_{j})$$ 
is a polynomial of degree at most $k$ and can therefore be naturally extended to a complex analytic function.

It follows from Equation (\ref{eq:PzD}) that $\P_{z}^{\mu}[D_{x}=k]\leq C_{D} c_{D}^{k}$ for all $z\in  C_{\A}(x)$ and some constants $C_{D}$ and $c_{D}<1$ that do not depend on $z$. Now, choose an open neighborhood  $\O_{\mu}^{\C}$ of $\mu$ such that there exists some $d\in[1,1/c_{D})$ such that
$$\max_{x\in B}\left\{\left|\frac{\tilde \mu(x)}{\mu(x)}\right|\right\}\leq d$$ 
for all $\tilde \mu\in \O_{\mu}^{\C}$. For each  $\tilde \mu\in \O_{\mu}^{\C}$ we have
\begin{eqnarray}\label{eq:lem:hanalytic:uniform}
|\P_{z}^{\tilde \mu}[D_{x}=k]|&=&|\E_{z}^{\mu}[\prod_{i=1}^{k} \frac{\tilde\mu (X_{i})}{\mu(X_{i})} \1_{D_{x}=k}]|=|\sum_{(x_{1},\ldots, x_{k})\in\gamma_{h}^{(k)}}\prod_{i=1}^{k} \frac{\tilde\mu (x_{i})}{\mu(x_{i})}\prod_{i=1}^{k} \mu(x_{i})|\cr
&\leq&  d^{k} \P_{z}^{\mu}[D_{x}=k]\leq C_{D} (dc_{D})^{k}.
\end{eqnarray}
Eventually, $1-h^{\tilde \mu}(z)$ is given locally as a uniform converging series of complex analytic functions on a fixed complex neighborhood that does not depend on the choices of $x$ and $z$. 
\end{proof}

\subsubsection*{Proof of Theorem \ref{thm:analytic}}
We have to prove that $\mu\mapsto v=\frac{\E^{\mu}[ d(Z_{ R_{2}}, Z_{ R_{1}})]}{\E^{\mu}[  R_{2}- R_{1}]}$ is real analytic. We will only prove that  the denumerator is an analytic function; the proof of the analyticity of the numerator is then a straightforward adaptation.  For the sake of simplicity we write $S_{k}, E_{k},$ and $D_{k}$ for $S_{k}^{(2)}, E_{k}^{(2)},$ and $D_{k}^{(2)}$. Moreover, we define $D_{0}=0$ and $E_{0}=E$. We have  
\[R_{2}-R_{1}= 1 + E\circ \theta^{R_{1}+1} + \sum_{k=1}^{\infty} (S_{k}-S_{k-1}) \1_{S_{1}<\infty,\ldots, S_{k}<\infty}.\]
Therefore, the denumerator can be written as 
\begin{equation}\label{eq:vfrac}
 \E^{\mu}[  R_{2}- R_{1}]=\E_{\bT}^{\mu}[1 + E\circ \theta^{1}]+ \sum_{k=1}^{\infty} \E_{\bT}^{\mu}[(S_{k}-S_{k-1}) \1_{S_{1}<\infty,\ldots, S_{k}<\infty}].
 \end{equation} 
 Since $S_{k}-S_{k-1}=D_{k-1}+E_{k-1}$ we first prove that $\E_{z}^{\mu}[E\mid D_{x}=\infty]$ and $\E_{z}^{\mu}[D\mid D_{x}=\infty]$ can be extended to complex analytic functions.

Due to Lemmata \ref{lem:stayinconegen} and \ref{lem:hanalytic} we can choose  an open neighborhood 
$\O^{\C}_{\mu}$ of $\mu$ and some $c_{h}>0$ such that $|h^{\tilde \mu}(z)|\geq c_{h} $ for all $\tilde\mu\in\O_{\mu}^{\C}$ 
and all $z\in  C_{\A}(x)$.
Recall, that $E$ has exponential moments, \emph{i.e.}, there exist constants $C_{E}$ and $c_{E}<1$ such that  $\P_{z}^{\mu}[E=k]\leq C_{E} c_{E}^{k}$ for all $z\in C_{\A}(x)$.
Now, we choose  $\O_{\mu}^{\C}$ smaller, if necessary, in order to guarantee that, for all $\tilde{\mu}\in \O_{\mu}^{\C}$,
$$\max_{x\in B}\left\{\left|\frac{\tilde \mu(x)}{\mu(x)}\right|\right\}\leq d<1/c_{E}.$$ For each $\mu$ we have
$$ \E_{z}^{ \mu}[E\mid D_{x}=\infty]=\sum_{k=0}^{\infty} k \E_{z}^{\mu}[ \1_{E=k}\mid D=\infty].$$
Consider
\begin{equation*}
f_{k}(\mu)= k \E_{z}^{\mu}[\1_{E=k}\mid D=\infty]  =   \E_{z}^{\mu}[k \prod_{j=1}^{k}\frac{h^{\mu}(Z_{j})}{h^{\mu}(Z_{j-1})} \1_{E=k} ] 
 =  \sum_{(z_{1},\ldots z_{k})\in\gamma^{(k)}_{E}}k \frac{h^{ \mu}(z_{k})}{h^{\mu}(z)}\prod_{i=1}^{k} {\mu (x_{i})} ,
\end{equation*} 
where $\gamma^{(k)}_{E}$ is the set of all paths of length $k$ corresponding to the event $\{E=k\}$.
Hence, each $f_{k}({\mu})$ can be extended to a complex  analytic function on $\O_{\mu}^{\C}$. If necessary we have to choose $\O_{\mu}^{\C}$ still smaller such that $h^{\mu}(z)$ is analytic on $\O_{\mu}^{\C}$. 

Now, for every $\tilde \mu\in \O_{\mu}^{\C}$:
$$|f_{k}(\tilde \mu)|\leq C k d^{k} c_{E}^{k} \frac1{1-c_{h}}$$
and hence $\E_{z}^{ \mu}[E\mid D_{x}=\infty]$ is given locally as a uniform converging series of complex analytic functions on a fixed neighborhood $\O_{\mu}^{\C}$ and therefore is analytic on $\O_{\mu}^{\C}.$  The proof of the analyticity of $\E_{z}^{ \mu}[D\mid D_{x}=\infty]$ is  similar and therefore omitted.

Let us return to the denumerator, see  Equation (\ref{eq:vfrac}), that can we written as 
$$\E^{\mu}[  R_{2}- R_{1}]=\sum_{k=0}^{\infty} g_{k}(\mu)$$
where
$$g_{0}(\mu)=\E_{\bT}^{\mu}[1 + E\circ \theta^{1}]\mbox{ and }  g_{k}(\mu)=\E_{\bT}^{\mu}[S_{k}-S_{k-1} \1_{S_{1}<\infty,\ldots, S_{k}<\infty}]\quad k\geq1.$$ 

The same arguments as above imply that the functions
$\E_{z}^{\mu}[D+E| D_{x}=\infty],~\P_{z}^{\mu}[S_{1}<\infty,\cdots, S_{k-2}<\infty| D_{x}=\infty]$ and $\E_{z}^{\mu}[D_{k-1}+E_{k-1} \1_{D_{k-1}<\infty}| D_{x}=\infty]$ extend to complex analytic function on some neighborhood $\O_{\mu}^{\C}$. The choice of the neighborhood is again independent of the choices of $x$, $z$ and $k$. In the same way we see that $g_{0}(\mu)$ extends to a complex analytic function. Furthermore, for $k\geq 1$ and $x$ of type $\bT$,
\begin{eqnarray*}
g_{k}(\mu) &=&  \E_{\bT}^{\mu}[S_{k}-S_{k-1} \1_{S_{1}<\infty,\ldots, S_{k}<\infty}]\cr
& =& \E_{x}^{\mu} [D_{k-1}+E_{k-1} \1_{D_{k-1}<\infty}| D_{x}=\infty] \P^{\mu}_{x}[S_{1}<\infty,\cdots, S_{k-2}<\infty| D_{x}=\infty],
\end{eqnarray*}
and hence  $g_{k}(\mu)$ extends to a complex analytic function on some neighborhood $\O_{\mu}^{\C}$.
In order to prove the analyticity of $\E^{\mu}[  R_{2}- R_{1}]$ it suffices to show that $\sum_{k} |g_{k}(\mu)|$ converges uniformly in some open neighborhood of $\mu$. To this end we use again the exponential moments of the involved random variables and Lemmata \ref{lem:stayinconegen} and \ref{lem:hanalytic} to obtain that there exists a complex neighborhood $\O_{\mu}^{\C}$ such that for all $\tilde\mu \in \O_{\mu}^{\C}$ we have  that 
$$
|\E_{x}^{\tilde \mu} [D_{k-1}+E_{k-1} \1_{D_{k-1}<\infty}| D_{x}=\infty]|\leq C \mbox{ and }  |\P^{\tilde \mu}_{x}[S_{1}<\infty,\cdots, S_{k-2}<\infty| D_{x}=\infty]|\leq (1-c)^{k-2}
$$
and eventually that $|g_{k}(\tilde \mu)|\leq C (1-c)^{k-2}$ for all $\tilde\mu \in \O_{\mu}^{\C}$.
This finishes the proof of the analyticity of the denumerator. 
As mentioned above the proof of the analyticity of the numerator is very similar and therefore omitted. Since $\E^{\mu}[  R_{2}- R_{1}]>0$ this proves the analyticity of the rate of escape.  The proof of the analyticity of the asymptotic variance is a straightforward adaption of the proof above.

\subsection{Bypassing of Assumption \ref{ass:2}}\label{sec:bypass}
Let $\Gamma$ be a surface group and $\mu$ be a probability measure on $\Gamma$ with a finite exponential moment and  whose support
generates $\Gamma$ as a semigroup.
Due to the irreducibility there exists some $\ell\in \N$ such that
$$\bar\mu:=\frac1\ell\sum_{i=1}^{\ell} \mu^{(\ell)}$$ 
fulfills Assumption \ref{ass:2}. Let $(\bar X_{j})_{j\geq 1}$ be 
a sequence of i.i.d.~random variables with law $\bar\mu$ and 
$\bar Z_{n}=\prod_{j=1}^{n}\bar X_{j}$ the corresponding random walk. 
Due to its construction the variables $\bar X_{j}$ can be seen as 
the result of  a \emph{two-step probability event}: 
let $(U_{j})_{j\geq 1}$ be i.i.d.~random variables 
(independent of $(X_{i})_{i\geq 1}$ with uniform distribution on 
$\{1,\ldots,\ell\}$ then
$\bar X_{j}\stackrel{\mathcal{D}}{=} Z_{U_{j}}$ for all $j\geq 1$ on an appropriate joint probability space. Define $T_{n}=\sum_{j=1}^{n} U_{j}$ then 
$$\bar Z_{n}\stackrel{\mathcal{D}}{=} Z_{T_{n}}.$$
The proof of Theorem \ref{thm:LLN_planar} can now be adapted as follows. 
Denote $\bar R_{n}$ the renewal times corresponding to $\bar Z_{n}$. 
Define
$k(n)=\max\{k: T_{\bar R_{k}<n}\}$. As $T_{n}/n\to (\ell+1)/2$ a.s.~and $\bar R_{k}/k \to \E[\bar R_{2}-\bar R_{1}]$ a.s.~we have that 
$$\frac{k(n)}n \xrightarrow[n\to\infty]{a.s.}  \frac{\ell+1}2 \E[\bar R_{2}-\bar R_{1}].$$ 
Eventually, we obtain 
\begin{equation}\label{eq:v:general} 
\frac1n d(Z_{n},e) \xrightarrow[n\to\infty]{a.s.} 
\frac{\ell+1}2  \frac{\E[ d(\bar Z_{ \bar R_{2}}, \bar Z_{ \bar R_{1}}])}{\E[  \bar R_{2}- \bar R_{1}]}.
\end{equation}
In the same spirit one can adjust the proof of Theorem \ref{thm:CLT_planar} and 
obtains a formula for the asymptotic variance

\begin{equation}
\sigma^{2}=\frac{2 \E[(d(\bar Z_{\bar R_{2}}, \bar Z_{\bar R_{1}}) - (\bar R_{2}- \bar R_{1})v)^{2}]}{(\ell+1)
\E[\bar R_{2}-\bar R_{1}]}.
\end{equation}
Due to the above formul\ae~for the rate of escape and asymptotic variance the results on analyticity 
also hold without Assumption \ref{ass:2}.

\section{Discussion}\label{sec:discussion}

The key ingredient that we used for the renewal theory is that for all $x\in \Gamma$
\begin{equation}
\P_{x}[Z_{n}\in C(x) \mbox{ for almost all } n]>0.
\end{equation}
This fact does not hold in general as  Example \ref{ex:counter1} shows.

\begin{ex}\label{ex:counter1}
Let $\Gamma$ be a hyperbolic group with generating set $S$ and neutral element $e$. We set $\Gamma'=\Gamma\times(\Z/2\Z)$ with generating set $S'=\{(s,0),~s\in S\}\cup \{(e,1)\}$. So the Cayley graph of $\Gamma'$ consists of two copies of the Cayley graph of $\Gamma$ that are connected by edges between $(x,0)$ and $(x,1)$ for all $x\in \Gamma$. Observe that every geodesic starting from the origin $(e,0)$ that goes through a point $(x,1)$ will never visit the $0$-level afterwards. Eventually, while the cones types of the level $1$ may be ubiquitous they are not large. In particular we have for all $x$ on level $1$ that $\P_{x}[Z_{n}\in C(x) \mbox{ for almost all } n]=0.$ 
\end{ex}

This fact was the motivation of the definition of  large cone types. Indeed, the existence of large cone types is almost necessary to the renewal structure we have defined. Let $\Gamma$ be a non-amenable finitely generated group endowed with a word metric
and a probability measure $\mu$ whose support generates $\Gamma$ as a semigroup. Let $A\subset\Gamma$ and let us consider $D_A=\inf\{n\geq 0:\ Z_n\notin A\}$. If $\P_x[D_A=\infty]>0$ then it is straightforward to show that 
$$\P_x\left[ \lim_{n\to\infty} d(Z_n,^c\!A)=\infty| D_A=\infty\right]=1\,.$$

It is possible to replace the cones in the renewal structure by quasi-cones. By this we mean that we replace the geodesics  in the definition of the cones by quasi-geodesics. One way to establish such a construction is to consider shadows of finite balls or symbolic dynamics as described in  \cite{CP:93}.  The quasi-cones obtained from such a construction are large in our sense. While such a construction might be of its own interest it seems not to imply  a central limit theorem.

Let us therefore end with two questions.
\begin{que}
Does any hyperbolic (automatic) group have large cone types  for some (all) finite  generating set ?
\end{que}
\begin{que}
Does any hyperbolic (automatic) group have ubiquitous cone types  for some (all) finite generating set ?
\end{que}

\section*{Acknowledgment}
The authors wish to express here their deep gratitude towards F.~Ledrappier for his unswerving support and to the anonymous referee for his/her comments.

\begin{small}
\addcontentsline{toc}{chapter}{Bibliography}
\bibliography{bib}
\end{small}

\begin{tabular}{lll}
Peter Ha\"{\i}ssinsky &\hspace{1cm} & Pierre Mathieu and Sebastian M\"uller \\
IMT & &I2M\\
Université Paul Sabatier & &Aix-Marseille Universit\'e\\
118 route de Narbonne & & 39, rue F. Joliot Curie\\
31062 Toulouse Cedex 9& &13453 Marseille Cedex 13\\
France & &France\\
\end{tabular}

\end{document}